\numberwithin{equation}{section}
  \newtheorem{theorem}{Theorem}[section]
  \newtheorem{proposition}[theorem]{Proposition}
  \newtheorem{lemma}[theorem]{Lemma}
  \newtheorem{corollary}[theorem]{Corollary}
  \newtheorem{remark}[theorem]{Remark}
  \newtheorem{definition}[theorem]{Definition}
  \newtheorem{example}[theorem]{Example}
\title[On three dimensional affine Szab\'o manifolds]{On three dimensional affine Szab\'o manifolds}
\author[Abdoul Salam DIALLO, Silas Longwap, Fortun\'{e} Massamba ]{Abdoul Salam Diallo*, Silas Longwap**, Fortun\'{e} Massamba***}
\newcommand{\acr}{\newline\indent}
\address{\llap{*\,} School of Mathematics, Statistics and Computer Science\acr
 University of KwaZulu-Natal\acr
 Private Bag X01, Scottsville 3209\acr
South Africa  \acr
and \acr
Universit\'e Alioune Diop de Bambey\acr
UFR SATIC, D\'epartement de Math\'ematiques\acr
B. P. 30, Bambey, S\'en\'egal}
\email{Diallo@ukzn.ac.za, abdoulsalam.diallo@uadb.edu.sn}
\address{\llap{**\,}  School of Mathematics, Statistics and Computer Science\acr
 University of KwaZulu-Natal\acr
 Private Bag X01, Scottsville 3209\acr
South Africa} \email{longwap4all@yahoo.com}
\address{\llap{***\,} School of Mathematics, Statistics and Computer Science\acr
 University of KwaZulu-Natal\acr
 Private Bag X01, Scottsville 3209\acr
South Africa} \email{massfort@yahoo.fr, Massamba@ukzn.ac.za}
\thanks{}
\subjclass[2010]{Primary 53B05; Secondary 53B20}
\keywords{Affine connection; Cyclic parallel; Riemannian extension; Szab\'o manifold}
\begin{document}

\begin{abstract}
In this paper, we consider the cyclic parallel Ricci tensor condition, which is a necessary condition
for an affine manifold to be Szab\'o. We show that, in dimension $3$, there are affine manifolds which
satisfy the cyclic parallel Ricci tensor but are not Szab\'o. Conversely, it is known that in dimension
$2$, the cyclic parallel Ricci tensor forces the affine manifold to be Szab\'o. Examples of $3$-dimensional 
affine Szabo manifolds are also given. Finally, we give some properties of Riemannian extensions
defined on the cotangent bundle over an affine Szab\'o manifold.
\end{abstract}

\maketitle

\section{Introduction}
The theory of connection is a classical topic in differential geometry. It was initially developed to 
solve pure geometrical problems. It provides an extremely important tool to study geometrical structures on 
manifolds and, as such, has been applied with great sources in many different settings. B. Opozda in 
(\cite{op}) classified locally homogeneous connection on $2$-dimensional manifolds equipped with torsion 
free affine connection. T. Arias-Marco and O. Kowalski  \cite{ak} classify locally homogeneous connections 
with arbitrary torsion on $2$-dimensional manifolds. E. Garc\'ia-Rio \textit{et al.}  \cite{gar} introduced the notion 
of the affine Osserman connections. The affine Osserman connections are well understood in dimension 
two (see  \cite{di,gar} for more details).

A (Pseudo) Riemannian manifold $(M,g)$ is said to be Szab\'o if the eigenvalues of the Szab\'o operator given by 
$$S(X):Y\rightarrow (\nabla_{X}R)(Y,X)X$$ are constants on the unit (Pseudo) sphere bundle, where $R$
denoting the curvature tensor (see \cite{broz} and \cite{gi} for details). The Szab\'o operator is a self adjoint operator with $S(X)X=0$. It plays an important role in the study of totally isotropic manifolds \cite{gis}. Szab\'o in  \cite{sz1} used techniques from algebraic topology to show, in the Riemannian setting, that any such a metric is locally symmetric. He used this observation to  
prove that any two point homogeneous space is either flat or is a rank one symmetric space. Subsequently 
Gilkey and Stravrov  \cite{gs} extended this result to show that any Szab\'o Lorentzian manifold has constant 
sectional curvature. However, for metrics of higher signature the situation is different. Indeed it was 
showed in  \cite{gis} the existence of Szab\'o Pseudo-Riemannian manifolds endowed with metrics of signature $(p,q)$ 
with $p\geq 2$ and $q\geq 2$ which are not locally symmetric .

In \cite{dm}, the authors introduced the so-called \textit{affine Szab\'o connections}. They proved, in 
dimension $2$, that an affine connection $\nabla$ is affine Szab\'o if and only if the Ricci tensor of $\nabla$ is 
cyclic parallel while in dimension $3$ the concept seems to be very challenging by giving only partial results. 

The aim of the present paper is to give an explicit form of two families of affine connections which are affine Szab\'o on $3$-dimensional manifolds. Moreover, although both results provide examples of affine Szab\'o connections, they are essentially different in nature since, in the first family, the affine 
Szab\'o condition coincides with the cyclic parallelism of the Ricci tensor, whereas the second one is not. For any affine connection $\nabla$ on $M$,
there exist a technique called \textit{Riemannian extension}, which relates affine and pseudo-Riemannian geometries.
This technique is very powerful in constructing new examples of pseudo-Riemannian metrics. The relation between affine Szab\'o
manifolds and pseudo-Riemannian Szab\'o manifolds are investigated by using  Riemannian extensions.

The paper is organized as follows. In section \ref{Prem}, we recall some basic definitions and geometric objects, 
namely, torsion, curvature tensor, Ricci tensor and affine Szab\'o operator on an affine manifold. In section \ref{RiccCycl}, we study the cyclic parallelism of the Ricci tensor for two particular cases of affine connections in $3$-dimensional affine manifolds. We establish geometrical configurations of affine manifolds admitting a cyclic parallel Ricci tensor (Propositions 3.2 and 3.4). In section \ref{Szabo}, we study the Szab\'o 
condition on two particular affine connections (Theorems 4.5 and 4.7). Finally, we end the paper in section 5 by investigating the Riemannian extensions defined on the cotangent bundle over an affine Szab\'o manifold.

\section{Preliminaries}\label{Prem}

Let $M$ be an $n$-dimensional smooth manifold and $\nabla$ be an affine connection on $M$. We consider a 
system of coordinates $(x_{1},x_{2},\cdots,x_{n})$ in a neighborhood $\mathcal{U}$ of a point $p$ in $M$. In 
$\mathcal{U}$ the affine connection is given by
\begin{equation}
\nabla_{\partial_{i}}\partial_{j} =f^{k}_{ij}\partial_{k}
\end{equation}
where $\{\partial_{i}=\frac{\partial}{\partial x_{i}}\}_{1\leq i\leq n}$ is a basis of the tangent space 
$T_{p} M$ and the functions $f_{ij}^{k}\, (i,j,k=1,2,3,\cdots,n)$ are called the \textit{Christoffel symbols} of 
the affine connection. We shall call the pair $(M,\nabla)$ \textit{affine manifold}. Some tensor fields associated 
with the given affine connection $\nabla$ are defined below.

The \textit{torsion tensor} field $T^{\nabla}$ is defined by
\begin{equation}
T^{\nabla}(X,Y)=\nabla_{X}Y-\nabla_{Y}X-\nabla_{[X,Y]}
\end{equation}
for any vector fields $X$ and $Y$ on $M$. The components of the torsion tensor $T^{\nabla}$ in local coordinates are
\begin{equation}
T_{ij}^{k}=f_{ij}^{k}-f^{k}_{ji}.
\end{equation}
If the torsion tensor of a given affine connection $\nabla$ vanishes, we say that $\nabla$ is torsion-free

The \textit{curvature tensor field} $\mathcal{R}^{\nabla}$ is defined by 
\begin{equation}
\mathcal{R}^{\nabla}(X,Y)=\nabla_{X}\nabla_{Y}Z-\nabla_{Y}\nabla_{X}Z-\nabla_{[X,Y]}Z
\end{equation}
for any vector field $X, Y$ and $Z$ on $M$. The components in local coordinates are
\begin{equation}
\mathcal{R}^{\nabla}(\partial_{k},\partial_{l})=\sum_{i}R^{i}_{jkl}\partial_{i}.
\end{equation}
We shall assume that $\nabla$ is torsion-free. If $\mathcal{R}^{\nabla}=0$ on $M$, we say that $\nabla$ is flat 
affine connection. It is known that $\nabla$ is flat if and only if around a point $p$ there exist a local coordinate 
system such that $f_{ij}^{k}=0$ for all $i,j,k$.

We define \textit{Ricci tensor} $Ric^{\nabla}$ by
\begin{equation}
Ric^{\nabla}(X,Y)= \mathrm{trace}\{Z\mapsto \mathcal{R}^{\nabla}(Z,X)Y\}.
\end{equation} 
The components in local coordinates are given by
\begin{eqnarray}\label{CurRicc1}
Ric^{\nabla}(\partial_{j},\partial_{k})=\sum_{i}R^{i}_{kij} .
\end{eqnarray}
It is known that in Riemannian geometry the Levi-Civita connection of a Riemannian metric has symmetric 
Ricci tensor, that is $Ric^{\nabla}(X,Y)=R^{\nabla}(Y,X)$. But this property is not true for an arbitrary 
torsion-free affine connection. In fact, the property is closely related to the concept of parallel volume 
element. (See ~\cite{ns} for more details).

The covariant derivative of the curvature tensor $\mathcal{R}^{\nabla}$ is given by
\begin{align*}
 (\nabla_{X}\mathcal{R}^{\nabla})(Y, Z)W &=  \nabla_{X}\mathcal{R}^{\nabla}(Y,Z)W
 -\mathcal{R}^{\nabla}(\nabla_X Y,Z)W \\
 &-\mathcal{R}^{\nabla}(Y,\nabla_X Z)W   -\mathcal{R}^{\nabla}(Y,Z) \nabla_X W.
\end{align*}
The covariant derivative of the Ricci tensor $Ric^{\nabla}$ is defined by 
\begin{equation}\label{CoDerRicNa1}
(\nabla_{X}Ric^{\nabla})(Z,W)=X(Ric^{\nabla}(Z,W))-Ric^{\nabla}(\nabla_{X}Z,W)-Ric^{\nabla}(Z,\nabla_{X}W).
\end{equation}
For $X\in\Gamma(T_{p}M)$, we define the \textit{affine Szab\'o operator} $S^{\nabla}(X):T_p M\rightarrow T_p M$ with 
respect to $X$ by 
\begin{equation}
S^{\nabla} (X)Y :=(\nabla_{X} \mathcal{R}^{\nabla})(Y,X)X 
\end{equation}
for any vector field $Y$. The affine Szab\'o operator satisfies $S^{\nabla}(X)X=0$ and 
$S^{\nabla}(\beta X)=\beta^{3}S^{\nabla}(X)$ for $\beta\in \mathbb{R}-\{0\}$ and $X\in T_{p}M$. If $Y=\partial_{m}$, for
$m=1,2,\cdots,n$ and $X= \sum_{i}\alpha_{i}\partial_{i}$ one get
\begin{equation}
S^{\nabla}(X)\partial_{m}=\sum_{i,j,k=1}^{n}\alpha_{i}\alpha_{j}\alpha_{k}(\nabla_{\partial_{i}}\mathcal{R}^{\nabla})(\partial_{m},\partial_{j})\partial_{k}.
\end{equation}
Note that, by definition of the Ricci tensor, one has
\begin{equation}
 \mathrm{trace}(Y\mapsto (\nabla_{X} \mathcal{R}^{\nabla})(Y,X)X)=(\nabla_{X}Ric^{\nabla})(X,X).
\end{equation}

\section{Affine connections with cyclic parallel Ricci tensor}\label{RiccCycl}

In this section, we investigate affine connections whose Ricci tensors are cyclic parallel. We shall consider two
cases of $3$-dimensional smooth manifolds with specific affine connections. We start with a formal definition.

\begin{definition}{\rm The Ricci tensor $Ric^{\nabla}$ of an affine manifold $(M,\nabla)$ is cyclic parallel if
\begin{equation}\label{RicciCycP1}
(\nabla_{X}Ric^{\nabla})(X,X)=0,
\end{equation}
for any vector field $X$ tangent to $M$ or, equivalently, if
$$
 \mathfrak{G}_{X,Y,Z}(\nabla_{X}Ric^{\nabla})(Y,Z)=0,
$$
for any vector fields $X,Y,$ and $Z$ tangent to $M$ where $\mathfrak{G}_{X,Y,Z}$ denotes the cyclic sum with respect 
to $X,Y$ and $Z$.
}
\end{definition}
Locally, the equation (\ref{RicciCycP1}) takes the form
\begin{equation}
(\nabla_{\partial_{i}}Ric^{\nabla})_{jk}=0
\end{equation}
or can be written out without the symmetrizing brackets
\begin{equation}
(\nabla_{\partial_{i}}Ric^{\nabla})_{jk}+(\nabla_{\partial_{j}}Ric^{\nabla})_{ki}+(\nabla_{\partial_{k}}Ric^{\nabla})_{ij}=0.
\end{equation}
For $X=\sum_{i}\alpha_{i}\partial_{i}$ , it is easy to show that
\begin{equation}\label{RicciCycP2}
(\nabla_{X}Ric^{\nabla})(X,X)=\sum_{i,j,k}\alpha_{i}\alpha_{j}\alpha_{k}(\nabla_{\partial_{i}}Ric^{\nabla})_{jk}.
\end{equation}
Now, we are going to present two cases of affine connections in which we investigate the cyclic parallelism of the Ricci 
tensor.   

\textit{Case 1:}  
Let $M$ be a $3$-dimensional smooth manifold and $\nabla$ be an affine torsion-free connection. Suppose that the action 
of the affine connection $\nabla$ on the basis of the tangent space $\{\partial_{i}\}_{1\le i\le 3}$ is given by 
\begin{equation}\label{CoefCon1}
  \nabla_{\partial_{1}}\partial_{1} =f_{1} \partial_{1},\;\;\nabla_{\partial_{1}}\partial_{2} =f_{2} \partial_{1}\;\;\mbox{and}\;\;\nabla_{\partial_{1}}\partial_{3} = f_{3} \partial_{1},
\end{equation} 
where the smooth functions$f_{i} =f_{i}(x_1,x_2,x_3)$ are Christoffel symbols. The non-zero components of the curvature tensor 
$\mathcal{R}^{\nabla}$ of the affine connection (\ref{CoefCon1}) are given by
\begin{align*}
\mathcal{R}^{\nabla}(\partial_{1},\partial_{2})\partial_{1}&= (\partial_{1}f_{2}-\partial_{2}f_{1})\partial_{1}, \;\;\;
\mathcal{R}^{\nabla}(\partial_{1},\partial_{2})\partial_{2} = -(\partial_{2}f_{2}+f_{2}^{2})\partial_{1}, \\
\mathcal{R}^{\nabla}(\partial_{1},\partial_{2})\partial_{3}&= -(\partial_{2}f_{3}+f_{2}f_{3})\partial_{1},\;\;\;
\mathcal{R}^{\nabla}(\partial_{1},\partial_{3})\partial_{1}= (\partial_{1}f_{3}-\partial_{3}f_{1})\partial_{1},\\
\mathcal{R}^{\nabla}(\partial_{1},\partial_{3})\partial_{2}&= -(\partial_{3}f_{2}+f_{2}f_{3})\partial_{1},\;\;\;
\mathcal{R}^{\nabla}(\partial_{1},\partial_{3})\partial_{3} = -(\partial_{3}f_{3}+f_{3}^{2})\partial_{1}\nonumber\\
\mathcal{R}^{\nabla}(\partial_{2},\partial_{3})\partial_{1}&= (\partial_{2}f_{3}-\partial_{3}f_{2})\partial_{1}.
\end{align*}
From (\ref{CurRicc1}), the non-zero components of the Ricci tensor $Ric^{\nabla}$ of the affine connection (\ref{CoefCon1}) 
are given by:
\begin{align*}
Ric^{\nabla}(\partial_2,\partial_1) &=  \partial_1f_2-\partial_2f_1, \;\;\;
Ric^{\nabla}(\partial_2,\partial_2)  =  -(\partial_2f_2+f_2^2),\\
Ric^{\nabla}(\partial_2,\partial_3) & =  -(\partial_2f_3+f_2f_3),\;\;\;
Ric^{\nabla}(\partial_3,\partial_1)  = \partial_1f_3-\partial_3f_1,\\
Ric^{\nabla}(\partial_3,\partial_2) &=  -(\partial_3f_2+f_2f_3),\;\;\;
Ric^{\nabla}(\partial_3,\partial_3)  = -(\partial_3f_3+f_3^2).
\end{align*}

\begin{proposition}\label{proposition}
On $\mathbb{R}^{3}$, the affine connection $\nabla$ defined in (\ref{CoefCon1}) satisfies the relation (\ref{RicciCycP1}) if the 
functions $f_i=f_{i}(x_1,x_2,x_3)$, for $i=1,2,3$, satisfy the following partial differential equations:
\begin{align}
&\partial^2_3f_3+2f_3\partial_3f_3=0\nonumber\\
&\partial^2_2f_2+2f_2\partial_2f_2=0\nonumber\\
&\partial^2_3f_1+4f_3\partial_1f_3-2f_3\partial_3f_1=0\nonumber\\
&\partial^2_2f_1+4f_2\partial_1f_2-2f_2\partial_2f_1=0\nonumber\\
&\partial^2_1f_3-\partial_1\partial_3f_1-f_1\partial_1f_3+f_1\partial_3f_1=0\nonumber\\
&\partial^2_1f_2-\partial_1\partial_2f_1-f_1\partial_1f_2+f_1\partial_2f_1=0\nonumber\\
&\partial^2_2f_3+2\partial_3\partial_2f_2+2f_2\partial_3f_2+2f_3\partial_2f_2+2f_2\partial_2f_3=0\nonumber\\
&\partial^2_3f_2+2\partial_3\partial_2f_3+2f_3\partial_2f_3+2f_3\partial_3f_2+2f_2\partial_3f_3=0\nonumber\\
&4f_3\partial_1f_2+4f_2\partial_1f_3-2f_3\partial_2f_1-2f_2\partial_3f_1+2\partial_3\partial_2f_1=0
\end{align}
\end{proposition}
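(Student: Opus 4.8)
The proof will be a direct computation, organized as follows. First I would complete the description of $\nabla$: since only the derivatives $\nabla_{\partial_1}\partial_j$ are prescribed in (\ref{CoefCon1}) and $\nabla$ is torsion-free, one has $\nabla_{\partial_2}\partial_1=\nabla_{\partial_1}\partial_2=f_2\partial_1$ and $\nabla_{\partial_3}\partial_1=\nabla_{\partial_1}\partial_3=f_3\partial_1$, while the remaining Christoffel symbols $\nabla_{\partial_2}\partial_2$, $\nabla_{\partial_2}\partial_3$, $\nabla_{\partial_3}\partial_2$, $\nabla_{\partial_3}\partial_3$ all vanish. In particular every $\nabla_{\partial_i}\partial_j$ is a multiple of $\partial_1$. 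Combining this with the list of the six nonzero components of $Ric^\nabla$ recorded above — and with the fact that $Ric^\nabla(\partial_1,\cdot)\equiv 0$, so that $Ric^\nabla$ is \emph{not} symmetric here — makes every term in (\ref{CoDerRicNa1}) completely explicit.

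Second, I would invoke (\ref{RicciCycP2}): for $X=\sum_i\alpha_i\partial_i$ the condition (\ref{RicciCycP1}) is the vanishing of the cubic form $\sum_{i,j,k}\alpha_i\alpha_j\alpha_k(\nabla_{\partial_i}Ric^\nabla)_{jk}$, hence is equivalent to the vanishing of the coefficient of each monomial $\alpha_i\alpha_j\alpha_k$ with $i\le j\le k$, that is, to ten scalar equations. For the evaluation of each term I would use the simplification that, because $\nabla_{\partial_i}\partial_j\in\mathrm{span}\{\partial_1\}$ and $Ric^\nabla(\partial_1,\partial_k)=0$, formula (\ref{CoDerRicNa1}) collapses to $(\nabla_{\partial_i}Ric^\nabla)_{jk}=\partial_i\!\left(Ric^\nabla(\partial_j,\partial_k)\right)-Ric^\nabla(\partial_j,\nabla_{\partial_i}\partial_k)$, and the surviving correction term is itself nonzero only when $j\in\{2,3\}$ and $\nabla_{\partial_i}\partial_k\neq 0$, i.e. $i=1$ or $k=1$. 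Substituting the explicit Ricci components and collecting terms, the triple $\{1,1,1\}$ yields the trivial identity $0=0$ (since $Ric^\nabla(\partial_1,\partial_1)=0$ and $\nabla_{\partial_1}\partial_1=f_1\partial_1$), while the remaining nine triples produce — after grouping each expression by the pattern $\partial_a^2 f_b$ against the mixed second-order terms and the first-order terms — exactly the nine displayed partial differential equations.

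The only real obstacle is organizational rather than conceptual: one must keep track of ten index triples, of the asymmetry of $Ric^\nabla$, and of correction terms such as $Ric^\nabla(\partial_3,\nabla_{\partial_3}\partial_1)=f_3\,Ric^\nabla(\partial_3,\partial_1)$ which persist even when the differentiated argument is a basis field. As a representative check, for the triple $\{1,3,3\}$ the relevant sum $(\nabla_{\partial_1}Ric^\nabla)_{33}+(\nabla_{\partial_3}Ric^\nabla)_{31}+(\nabla_{\partial_3}Ric^\nabla)_{13}$ expands to $-\partial_3^2 f_1-4f_3\partial_1 f_3+2f_3\partial_3 f_1$, which reproduces the third equation of the system; the other triples are handled identically. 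Since the argument merely rewrites the scalar identity (\ref{RicciCycP2}) and uses nothing about $\mathbb{R}^3$ beyond the existence of the global coordinates $(x_1,x_2,x_3)$, the conclusion holds on all of $\mathbb{R}^3$; in particular, if the $f_i$ satisfy the nine equations then (\ref{RicciCycP1}) holds, which is the asserted implication.
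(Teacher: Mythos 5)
Your proposal is correct and follows essentially the same route as the paper, which proves the proposition by exactly this ``straightforward calculation'' from (\ref{CoDerRicNa1}) and (\ref{RicciCycP2}); your expansion by monomials $\alpha_i\alpha_j\alpha_k$ and your sample coefficient for $\alpha_1\alpha_3^2$ reproduce the displayed system correctly. You merely spell out the details (e.g.\ torsion-freeness giving $\nabla_{\partial_2}\partial_1=f_2\partial_1$, $\nabla_{\partial_3}\partial_1=f_3\partial_1$, and the vanishing of $Ric^{\nabla}(\partial_1,\cdot)$) that the paper leaves implicit.
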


\begin{proof}
From a straightforward calculation, using (\ref{CoDerRicNa1}) and (\ref{RicciCycP2}), one obtain the result.
\end{proof}

As an example to the Proposition \ref{proposition}, we have the following.
\begin{example}{\rm 
The Ricci tensors of the affine connections defined in (\ref{CoefCon1}) on $\mathbb{R}^3$ with
\begin{enumerate}
 \item $f_1=0, f_2=-x_3$ and $f_3 = x_2$;
 \item $f_1 =x_1, f_2 =2x_3$ and $f_3=-2x_2$
\end{enumerate}
are cyclic parallel.}
\end{example}

\textit{Case 2:}  
Let $M$ be a $3$-dimensional smooth manifold and $\nabla$ be an affine torsion-free connection. Suppose that 
the action of the affine connection $\nabla$ on the basis of the tangent space $\{\partial_{i}\}_{1\le i\le 3}$ is 
given by 
\begin{equation}\label{CoefCon2}
 \nabla_{\partial_{1}}\partial_{1} = f_{1} \partial_{2},\;\;\nabla_{\partial_{2}}\partial_{2} = f_{2} \partial_{3}\;\;\mbox{and}\;\;\nabla_{\partial_{3}}\partial_{3} = f_{3} \partial_{1},
\end{equation} 
where smooth functions $f_{i} =f_{i}(x_1,x_2,x_3)$ are Christoffel symbols . The non-zero components of the curvature tensor 
$\mathcal{R}^{\nabla}$ of the affine connection (\ref{CoefCon2}) are given by
\begin{align*}
 \mathcal{R}^{\nabla}(\partial_{1},\partial_{2})\partial_{1}& =-(\partial_{2}f_{1}\partial_{2}+f_{1}f_{2}\partial_{3}), \;\;\;
\mathcal{R}^{\nabla}(\partial_{1},\partial_{2})\partial_{2}=\partial_{1}f_{2}\partial_{3},\\
 \mathcal{R}^{\nabla}(\partial_{1},\partial_{3})\partial_{1}&=-\partial_{3}f_{1}\partial_{2},\;\;\;
\mathcal{R}^{\nabla}(\partial_{1},\partial_{3})\partial_{3}=\partial_{1}f_{3}\partial_{1}+f_{1}f_{3}\partial_{2},\\
 \mathcal{R}^{\nabla}(\partial_{2},\partial_{3})\partial_{2}&=-(\partial_{3}f_{2}\partial_{3}+f_{3}f_{2}\partial_{1}),\;\;\;
\mathcal{R}^{\nabla}(\partial_{2},\partial_{3})\partial_{3}=\partial_{2}f_{3}\partial_{1}.
\end{align*}
From (\ref{CurRicc1}), the non-zero components of the Ricci tensor $Ric^{\nabla}$ of the affine connection (\ref{CoefCon2}) 
are given by $Ric^{\nabla}(\partial_{1},\partial_{1})=\partial_{2}f_{1}$, $Ric^{\nabla}(\partial_{2},\partial_{2})=\partial_{3}f_{2}$, $ Ric^{\nabla}(\partial_{3},\partial_{3})=\partial_{1}f_{3}$. 
\begin{proposition}\label{PropoNewfunc}
The affine connection $\nabla$ defined on $\mathbb{R}^{3}$ by (\ref{CoefCon2}) satisfies (\ref{RicciCycP1}) if the 
functions $f_i=f_{i}(x_1,x_2,x_3)$, for $i=1,2,3$, has the following form:
\begin{equation*}
 f_1 =f(x_1) + g(x_3),\;\;
 f_2 = h(x_1) + u(x_2),\;\;
 f_3 = v(x_2) + t(x_3),
\end{equation*}
where $f$, $g$, $h$, $u$, $v$ and $t$ are smooth functions on $\mathbb{R}^{3}$.
\end{proposition}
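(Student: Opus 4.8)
The plan is to read the cyclic parallel condition off the identity (\ref{RicciCycP2}), which reduces $(\nabla_X Ric^\nabla)(X,X)=0$ to the vanishing of the cubic form $\sum_{i,j,k}\alpha_i\alpha_j\alpha_k(\nabla_{\partial_i}Ric^\nabla)_{jk}$, and to feed into it the explicit Ricci tensor of the connection (\ref{CoefCon2}) recorded just above the statement. The structural observation driving everything is that the three prescribed forms are chosen exactly so that $f_1$ does not depend on $x_2$, $f_2$ does not depend on $x_3$, and $f_3$ does not depend on $x_1$.

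With that in hand the argument is immediate. The only possibly nonzero components of $Ric^\nabla$ are $Ric^\nabla(\partial_1,\partial_1)=\partial_2 f_1$, $Ric^\nabla(\partial_2,\partial_2)=\partial_3 f_2$ and $Ric^\nabla(\partial_3,\partial_3)=\partial_1 f_3$. Substituting $f_1=f(x_1)+g(x_3)$, $f_2=h(x_1)+u(x_2)$ and $f_3=v(x_2)+t(x_3)$ makes each of $\partial_2 f_1$, $\partial_3 f_2$ and $\partial_1 f_3$ vanish identically, so $Ric^\nabla\equiv 0$ on $\mathbb{R}^3$. By the definition (\ref{CoDerRicNa1}) of the covariant derivative of the Ricci tensor one then gets $\nabla Ric^\nabla\equiv 0$, and in particular $(\nabla_X Ric^\nabla)(X,X)=0$ for every vector field $X$, which is precisely (\ref{RicciCycP1}). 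Hence $\nabla$ has cyclic parallel Ricci tensor.

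If one prefers an argument that does not pass through Ricci-flatness, the alternative is a direct check: using (\ref{CoDerRicNa1}) and the Christoffel symbols in (\ref{CoefCon2}) one computes every $(\nabla_{\partial_i}Ric^\nabla)_{jk}$, assembles the cubic polynomial in $\alpha_1,\alpha_2,\alpha_3$ coming from (\ref{RicciCycP2}), and verifies that all of its coefficients vanish for the prescribed $f_i$; the three forms are exactly what cancels both the pure second-derivative terms such as $\partial_2^2 f_1$ and the quadratic terms such as $f_1\,\partial_3 f_2$ that appear there.

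I expect no genuine analytic difficulty in carrying this out; the only point to be careful about is that the statement asserts sufficiency only. The prescribed forms are not forced by cyclic parallelism — any $f_1$ independent of $x_2$, not merely a sum $f(x_1)+g(x_3)$, already annihilates $Ric^\nabla(\partial_1,\partial_1)$ — so I would resist trying to strengthen the proposition to an ``if and only if''.
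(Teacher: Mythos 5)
Your proof is correct, but it takes a genuinely shorter route than the paper's. The paper's own proof works directly with the cyclic-parallelism condition: using (\ref{CoDerRicNa1}) and (\ref{RicciCycP2}) it writes out all coefficients of the cubic form $\sum_{i,j,k}\alpha_i\alpha_j\alpha_k(\nabla_{\partial_i}Ric^{\nabla})_{jk}$ for the connection (\ref{CoefCon2}) and obtains the nine equations
$\partial_1\partial_2 f_1=\partial_3\partial_2 f_1=\partial_1\partial_3 f_2=\partial_2\partial_3 f_2=\partial_2\partial_1 f_3=\partial_3\partial_1 f_3=0$,
$\partial_2^2 f_1-2f_1\partial_3 f_2=0$, $\partial_1^2 f_3-2f_3\partial_2 f_1=0$, $\partial_3^2 f_2-2f_2\partial_1 f_3=0$,
from which the stated separated forms are read off; this is essentially your ``alternative'' route. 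Your main argument bypasses the covariant-derivative computation entirely: since the only possibly nonzero Ricci components are $\partial_2 f_1$, $\partial_3 f_2$, $\partial_1 f_3$, the prescribed forms give $Ric^{\nabla}\equiv 0$, hence $\nabla Ric^{\nabla}\equiv 0$ by (\ref{CoDerRicNa1}), and (\ref{RicciCycP1}) follows trivially. That is a valid and more economical proof of the sufficiency actually asserted. What the paper's computation buys is the complete system of conditions equivalent to cyclic parallelism for this family, which is the information reused later in the paper; what your route buys is transparency about how weak the hypothesis is: any $f_1$ independent of $x_2$, $f_2$ independent of $x_3$, $f_3$ independent of $x_1$ already forces $Ric^{\nabla}\equiv 0$, and the nine equations even admit non-Ricci-flat solutions not of the stated form (for instance $f_1=e^{\sqrt{2}\,x_2}$, $f_2=x_3$, $f_3=0$), so your caution against promoting the proposition to an ``if and only if'' is well placed.
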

\begin{proof}
From a straightforward calculation, using (\ref{CoDerRicNa1}) and (\ref{RicciCycP2}), one obtain 
the following partial differential equations:
\begin{align}
&\partial_{1}\partial_{2}f_{1}=0, \;\;\; \partial_{3}\partial_{2}f_{1}=0, \;\; \partial_{1}\partial_{3}f_{2}=0, \;\; \partial_{2}\partial_{3}f_{2}=0, \;\; \partial_{2}\partial_{1}f_{3}=0,\nonumber\\
&\partial_{3}\partial_{1}f_{3}=0, \;\; \partial^{2}_{2}f_{1}-2f_{1}\partial_{3}f_{2}=0,\;\;\partial^{2}_{1}f_{3}-2f_{3}\partial_{2}f_{1}=0, \;\;\partial^{2}_{3}f_{2}-2f_{2}\partial_{1}f_{3}=0,\nonumber
\end{align}
and the result follows.
\end{proof}
As an application to this proposition, we have:
\begin{example}\label{ExamplImport}{\rm 
The Ricci tensor of the following affine connection defined on $\mathbb{R}^3$ by
\begin{eqnarray*}
 \nabla_{\partial_1} \partial_1 = x^{2}_{1} \partial_2,\quad
 \nabla_{\partial_2} \partial_2 = (x_1 + x_2) \partial_3,\quad \mbox{and}\quad
 \nabla_{\partial_3} \partial_3 &=& (x_2 + x^{2}_{3}) \partial_1,
\end{eqnarray*}
is cyclic parallel.}
\end{example}

The manifolds with cyclic parallel Ricci tensor, known as $L_{3}$-spaces, are well-developed in Riemannian 
geometry. The cyclic parallelism of the Ricci tensor is sometime called the ``\textit{First Ledger condition}'' 
\cite{pt}. In \cite{sz2}, for instance, the author proved that a smooth Riemannian manifold satisfying the 
first Ledger condition is real analytic. These Riemannian manifolds were introduced by A. Gray in (~\cite{gr}) 
as a special subclass of (connected) Riemannian manifolds $(M,g)$, called Einstein-like spaces, all of which 
have constant scalar curvature. Also, Riemannian manifolds of dimension $3$ with cyclic parallel Ricci tensor 
are locally homogeneous naturally reductive (~\cite{pt}). Tod in \cite{to} used the same condition to characterize 
the $4$-dimensional K\"ahler manifolds which are not Einstein. It has also enriched the D'Atri spaces 
(see \cite{ks,pt} for more details).

\section{The affine Szab\'o manifolds}\label{Szabo}

Let $(M,\nabla)$ be an $n$-dimensional affine manifold, i.e., $\nabla$ is a torsion free
connection on the tangent bundle of a smooth manifold $M$ of dimension $n$. Let $\mathcal{R}^{\nabla}$
be the associated curvature operator. We define the \textit{affine Szab\'o operator}
$S^{\nabla}(X):T_p M\rightarrow T_p M$ with respect to a vector $X\in T_p M$ by
\begin{equation*}
 S^{\nabla}(X) Y := (\nabla_X \mathcal{R}^{\nabla})(Y,X)X.
\end{equation*}

\begin{definition}{\rm
Let $(M,\nabla)$ be a smooth affine manifold and $p\in M$.
\begin{enumerate}
\item  $(M,\nabla)$ is called affine Szab\'o at $p\in M$ if the affine Szab\'o  operator $S^{\nabla}(X)$ has the 
same characteristic polynomial for every vector field $X$ on $M$.
\item Also, $(M,\nabla)$ is called affine Szab\'o if $(M,\nabla)$ is affine Szab\'o at each point $p\in M$.
\end{enumerate}
}
\end{definition}
\begin{theorem}{\rm
Let $(M,\nabla)$ be an $n$-dimensional affine manifold and $p\in M$. Then $(M,\nabla)$ is affine Szab\'o at 
$p\in M$ if and only if the characteristic polynomial of the affine Szab\'o operator $S^{\nabla}(X)$ is 
$$P_{\lambda}(S^{\nabla}(X))=\lambda^{n},$$ 
for every $X\in T_{p}M$.
}
\end{theorem}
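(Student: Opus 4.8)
The plan is to combine the scaling property $S^{\nabla}(\beta X)=\beta^{3}S^{\nabla}(X)$ for $\beta\in\mathbb{R}\setminus\{0\}$, recorded in Section~\ref{Prem}, with the defining requirement that an affine Szab\'o connection at $p$ has a characteristic polynomial of $S^{\nabla}(X)$ that does not depend on the chosen vector $X$. The point is that the cubic homogeneity of the Szab\'o operator, transported to the coefficients of its characteristic polynomial, is incompatible with those coefficients being constant in $X$ unless they vanish.

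First I would fix $X\in T_{p}M$ and expand
\begin{equation*}
P_{\lambda}(S^{\nabla}(X))=\lambda^{n}+c_{n-1}(X)\,\lambda^{n-1}+\cdots+c_{1}(X)\,\lambda+c_{0}(X),
\end{equation*}
where, up to sign, $c_{n-k}(X)$ is the sum of the $k\times k$ principal minors of the matrix of $S^{\nabla}(X)$ in a fixed basis $\{\partial_{i}\}$ of $T_{p}M$; hence $c_{n-k}$ is a homogeneous polynomial of degree $k$ in the entries of that matrix. Writing $X=\sum_{i}\alpha_{i}\partial_{i}$, the formula $S^{\nabla}(X)\partial_{m}=\sum_{i,j,k}\alpha_{i}\alpha_{j}\alpha_{k}(\nabla_{\partial_{i}}\mathcal{R}^{\nabla})(\partial_{m},\partial_{j})\partial_{k}$ shows that each entry of $S^{\nabla}(X)$ is homogeneous of degree $3$ in $X$ (equivalently, $S^{\nabla}(\beta X)=\beta^{3}S^{\nabla}(X)$), and therefore
\begin{equation*}
c_{n-k}(\beta X)=\beta^{3k}\,c_{n-k}(X),\qquad \beta\in\mathbb{R}\setminus\{0\},\ k=0,1,\dots,n.
\end{equation*}

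Next I would invoke the affine Szab\'o hypothesis at $p$ in the form: the characteristic polynomial of $S^{\nabla}(Y)$ is the same for every $Y\in T_{p}M$. Applying it with $Y=\beta X$ gives $c_{n-k}(\beta X)=c_{n-k}(X)$ for all $\beta\neq 0$; comparing with the homogeneity identity yields $(\beta^{3k}-1)\,c_{n-k}(X)=0$ for all $\beta\neq 0$. For $k\geq 1$ we may pick $\beta$ with $\beta^{3k}\neq 1$, so $c_{n-k}(X)=0$ for $k=1,\dots,n$, i.e.\ $P_{\lambda}(S^{\nabla}(X))=\lambda^{n}$; since $X\in T_{p}M$ was arbitrary, this proves one implication. (Alternatively, one may note that $S^{\nabla}(0)$ is the zero endomorphism, with characteristic polynomial $\lambda^{n}$, so if all the $S^{\nabla}(X)$ share a single characteristic polynomial it must be $\lambda^{n}$; and $S^{\nabla}(X)X=0$ already forces $c_{0}(X)=\pm\det S^{\nabla}(X)=0$ on its own.)

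The converse is immediate: if $P_{\lambda}(S^{\nabla}(X))=\lambda^{n}$ for every $X\in T_{p}M$, then in particular all these characteristic polynomials coincide, which is exactly the definition of $(M,\nabla)$ being affine Szab\'o at $p$. I do not expect a real obstacle here; the only step that needs a little care is the bookkeeping showing that $c_{n-k}$ is homogeneous of degree $3k$ (and not $k$) in $X$ — this is precisely where the cubic, rather than linear, homogeneity of the affine Szab\'o operator is used, and it is what makes the argument collapse all lower-order coefficients to zero.
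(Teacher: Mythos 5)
Your argument is correct, and it fills a genuine gap: the paper states this theorem without any proof, so there is nothing to compare against line by line. Your route is the natural one — exploit the cubic homogeneity $S^{\nabla}(\beta X)=\beta^{3}S^{\nabla}(X)$, note that the coefficient $c_{n-k}(X)$ of $\lambda^{n-k}$ is a degree-$k$ polynomial in the entries of $S^{\nabla}(X)$ and hence scales by $\beta^{3k}$, and conclude from the $X$-independence required by the definition that $c_{n-k}\equiv 0$ for $k\ge 1$; the converse is, as you say, immediate from the definition. This is exactly the standard argument used for the analogous statement about affine Osserman connections (Garc\'ia-R\'io et al.), and your parenthetical alternatives (evaluating at $X=0$, and using $S^{\nabla}(X)X=0$ to kill the determinant term) are consistent sanity checks rather than necessary steps.
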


\begin{corollary}{\rm
We say that $(M,\nabla)$ is affine Szab\'o if the affine Szab\'o operators are nilpotent, i.e., $0$ is the 
eigenvalue of $S^{\nabla}(X)$ on the tangent bundle.  
}
\end{corollary}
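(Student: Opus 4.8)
The plan is to establish the two implications separately; the forward direction is the substantive one and rests on the scaling law $S^{\nabla}(\beta X)=\beta^{3}S^{\nabla}(X)$ recorded in Section \ref{Prem}. First observe that, since $S^{\nabla}(X)Y=(\nabla_{X}\mathcal{R}^{\nabla})(Y,X)X$ and covariant differentiation of a tensor at $p$ depends on $X$ only through the value $X_{p}$, the operator $S^{\nabla}(X)|_{p}$ is determined by $X_{p}\in T_{p}M$ alone; moreover every tangent vector at $p$ is the value at $p$ of some vector field on $M$. Hence ``$(M,\nabla)$ is affine Szab\'o at $p$'' means precisely that the characteristic polynomial $P_{\lambda}(S^{\nabla}(X))$ is the same for all $X\in T_{p}M$.

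For the implication $(\Leftarrow)$, if $P_{\lambda}(S^{\nabla}(X))=\lambda^{n}$ for every $X\in T_{p}M$ then all these operators trivially share the common characteristic polynomial $\lambda^{n}$, so $(M,\nabla)$ is affine Szab\'o at $p$. For $(\Rightarrow)$, fix a basis $\{\partial_{i}\}$ of $T_{p}M$ and write $X=\sum_{i}\alpha_{i}\partial_{i}$. Expanding the characteristic polynomial, $P_{\lambda}(S^{\nabla}(X))=\sum_{k=0}^{n}(-1)^{k}\sigma_{k}(X)\lambda^{n-k}$, where $\sigma_{0}(X)=1$ and, for $1\le k\le n$, $\sigma_{k}(X)$ is the sum of the $k\times k$ principal minors of the matrix of $S^{\nabla}(X)$ in the basis $\{\partial_{i}\}$, hence a polynomial of degree $k$ in the entries of that matrix. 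By the formula for $S^{\nabla}(X)\partial_{m}$ recalled in Section \ref{Prem}, each such entry is a homogeneous polynomial of degree $3$ in $(\alpha_{1},\dots,\alpha_{n})$ with coefficients built from $(\nabla\mathcal{R}^{\nabla})$ at $p$; consequently $\sigma_{k}$ is a homogeneous polynomial of degree $3k$ in $(\alpha_{1},\dots,\alpha_{n})$. The affine Szab\'o hypothesis forces each coefficient $\sigma_{k}$ to be a constant function of $X\in T_{p}M$, and a homogeneous polynomial of positive degree that is constant must vanish identically; therefore $\sigma_{k}\equiv 0$ for $k=1,\dots,n$ and $P_{\lambda}(S^{\nabla}(X))=\lambda^{n}$, as desired. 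As a consistency check, $\sigma_{n}(X)=\det S^{\nabla}(X)=0$ already because $S^{\nabla}(X)X=0$ makes $0$ an eigenvalue of every $S^{\nabla}(X)$.

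There is no real obstacle here beyond this homogeneity bookkeeping: the one step that must be stated carefully is that ``equal characteristic polynomials for all $X$'' is equivalent to ``each elementary-symmetric coefficient $\sigma_{k}$ is constant in $X$,'' after which the degree-$3k$ homogeneity supplied by $S^{\nabla}(\beta X)=\beta^{3}S^{\nabla}(X)$ immediately kills all the lower-order coefficients. The corollary is then immediate, since $P_{\lambda}(S^{\nabla}(X))=\lambda^{n}$ says exactly that $S^{\nabla}(X)$ is nilpotent with $0$ as its only eigenvalue on $T_{p}M$.
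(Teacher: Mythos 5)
Your argument is correct, but note that the paper itself offers no proof of this corollary: it is stated as an immediate restatement of the preceding theorem (affine Szab\'o at $p$ if and only if $P_{\lambda}(S^{\nabla}(X))=\lambda^{n}$ for every $X\in T_{p}M$), which is in turn quoted without proof from \cite{dm}. What you have actually done is supply a proof of that theorem --- each coefficient $\sigma_{k}$ of the characteristic polynomial is homogeneous of degree $3k$ in $X$ because the matrix entries of $S^{\nabla}(X)$ are homogeneous cubics in the components of $X$ (equivalently $S^{\nabla}(\beta X)=\beta^{3}S^{\nabla}(X)$), so constancy of $\sigma_{k}$ in $X$ forces $\sigma_{k}\equiv 0$ for $k\geq 1$ --- and then read off the corollary as the standard equivalence between $P_{\lambda}(S^{\nabla}(X))=\lambda^{n}$ and nilpotency of $S^{\nabla}(X)$. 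All the steps are sound: tensoriality of $\nabla\mathcal{R}^{\nabla}$ justifies passing from vector fields to tangent vectors at $p$, the equality of characteristic polynomials is indeed equivalent to constancy of each $\sigma_{k}$, and a constant homogeneous polynomial of positive degree vanishes. Two minor remarks: the scaling step could be shortcut by comparing with $X=0$ (where $S^{\nabla}(0)=0$ gives characteristic polynomial $\lambda^{n}$ outright), though your version is the one that survives if the definition is read as ranging only over nonzero $X$; and your aside that $\det S^{\nabla}(X)=0$ because $S^{\nabla}(X)X=0$ needs $X\neq 0$, which is harmless here. So your proposal is consistent with, and in fact fills in, the chain of reasoning the paper leaves implicit.
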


\begin{corollary}{\rm
 If $(M,\nabla)$ is affine Szab\'o at $p\in M$, then the Ricci
 tensor is cyclic parallel.
 }
\end{corollary}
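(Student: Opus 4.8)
The plan is to connect the affine Szab\'o condition directly to the cyclic parallelism of the Ricci tensor through the trace identity already recorded in Section~\ref{Prem}. Recall that for any $X \in T_p M$ we have
$$
\mathrm{trace}\bigl(Y \mapsto (\nabla_X \mathcal{R}^{\nabla})(Y,X)X\bigr) = (\nabla_X Ric^{\nabla})(X,X),
$$
which says precisely that the trace of the affine Szab\'o operator $S^{\nabla}(X)$ equals $(\nabla_X Ric^{\nabla})(X,X)$. So the first step is to invoke the preceding Theorem: $(M,\nabla)$ is affine Szab\'o at $p$ if and only if the characteristic polynomial of $S^{\nabla}(X)$ is $\lambda^n$ for every $X \in T_p M$. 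In particular, every such $S^{\nabla}(X)$ is nilpotent, hence traceless.

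Combining these two facts, affine Szab\'o at $p$ forces $\mathrm{trace}\,S^{\nabla}(X) = (\nabla_X Ric^{\nabla})(X,X) = 0$ for all $X \in T_p M$. That is exactly the defining equation \eqref{RicciCycP1} of cyclic parallelism of the Ricci tensor at $p$. Running this at every point $p \in M$ gives the statement. I would write the argument in two or three short sentences: quote the characteristic polynomial theorem to get nilpotency, quote the trace identity, observe that a nilpotent endomorphism of a finite-dimensional vector space has zero trace, and conclude that $(\nabla_X Ric^{\nabla})(X,X) = 0$ for all $X$, which by the Definition in Section~\ref{RiccCycl} is the cyclic parallel condition.

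There is essentially no obstacle here; the corollary is a direct consequence of material already assembled. The only point requiring a word of care is that the equality $(\nabla_X Ric^{\nabla})(X,X) = 0$ for all $X$ really is equivalent to the full symmetrized condition $\mathfrak{G}_{X,Y,Z}(\nabla_X Ric^{\nabla})(Y,Z) = 0$ — but this equivalence is exactly what the Definition in Section~\ref{RiccCycl} asserts (it follows by polarization, since the expression is a cubic form in $X$), so it may be cited rather than reproved. Thus the proof is a three-line deduction: nilpotency $\Rightarrow$ vanishing trace $\Rightarrow$ vanishing of $(\nabla_X Ric^{\nabla})(X,X)$ $\Rightarrow$ cyclic parallel Ricci tensor.
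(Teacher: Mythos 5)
Your argument is correct and is exactly the one the paper intends: the paper states this corollary without a written proof, but it records the trace identity $\mathrm{trace}\,S^{\nabla}(X)=(\nabla_{X}Ric^{\nabla})(X,X)$ in Section~2 precisely so that nilpotency of $S^{\nabla}(X)$ (from the characteristic polynomial $\lambda^{n}$) forces $(\nabla_{X}Ric^{\nabla})(X,X)=0$ for all $X$, i.e.\ cyclic parallelism. Your added remark that the pointwise cubic condition polarizes to the symmetrized form is a sound justification of the equivalence the paper's definition asserts.
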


Affine Szab\'o connections are well-understood in $2$-dimension, due to the fact that an affine connection
is Szab\'o if and only if its Ricci tensor is cyclic parallel \cite{dm}. The situation is however more 
involved in higher dimensions where the cyclic parallelism is a necessary but not sufficient condition for 
an affine connection to be Szab\'o.

Let $X=\sum_{i=1}^{3} \alpha_i \partial_i$ be a vector field on a $3$-dimensional affine manifold $M$. Then the
affine Szab\'o operator is given by
\begin{eqnarray*}
 S^{\nabla}(X) (\partial_{m}) = \sum_{i,j,k=1}^{3}\alpha_{i}\alpha_{j}\alpha_{k}(\nabla_{\partial_{i}}
 \mathcal{R}^{\nabla})(\partial_{m},\partial_{j})\partial_{k}, \quad m=1,2,3.
\end{eqnarray*}

\subsection{First Family of affine Szab\'o connection.}
Next, we give an example of a family of affine Szab\'o connection on a $3$-dimensional manifold. Let us consider 
the affine connection defined in $(3.5)$, i.e.,
\begin{equation*}
\nabla_{\partial_{1}}\partial_{1}=f_{1}\partial_{1},\quad
\nabla_{\partial_{1}}\partial_{2}=f_{2}\partial_{1}\quad \mbox{and}\quad
\nabla_{\partial_{1}}\partial_{3}=f_{3}\partial_{1},
\end{equation*}
where the smooth functions $f_i = f_i(x_1,x_2,x_3)$ ($i=1,2,3$) are Christoffel symbols. For $ X=\sum_{i=1}^{3}\alpha_{i}\partial_{i}$, the affine 
Szab\'o operator is given by
\begin{align*}
S^{\nabla}(X) (\partial_{1})=a_{11}\partial_1, \quad
S^{\nabla}(X) (\partial_{2})=a_{12}\partial_1 \quad \mbox{and}\quad
S^{\nabla}(X) (\partial_{3})=a_{13}\partial_1
\end{align*}
with
\begin{align}
  a_{11} &=\alpha_{3}^{3}\{\partial^2_3f_3 + 2f_3\partial_3f_3\}
+ \alpha_{2}^{3}\{\partial^2_2f_2 + 2f_2\partial_2f_2\}\nonumber\\
&
+ \alpha_{3}^{2}\alpha_{1}\{\partial^2_3f_1 + 4f_3\partial_1f_3 - 2f_3\partial_3f_1\}\nonumber\\
& + \alpha_{2}^{2}\alpha_{1}\{\partial^2_2f_1 + 4f_2\partial_1f_2 - 2f_2\partial_2f_1\}\nonumber\\
&
+ \alpha_{1}^{2}\alpha_{3}\{\partial^2_1f_3 - \partial_1\partial_3f_1 - f_1\partial_1f_3 
+ f_1\partial_3f_1\}\nonumber\\
& + \alpha_{1}^{2}\alpha_{2}\{\partial^2_1f_2 - \partial_1\partial_2f_1 - f_1\partial_1f_2 
+ f_1\partial_2f_1\}\nonumber\\
& + \alpha_{2}^{2}\alpha_{3}\{\partial^2_2f_3 + 2\partial_3\partial_2f_2 
+ 2f_2\partial_3f_2 + 2f_3\partial_2f_2 + 2f_2\partial_2f_3\}\nonumber\\
& +\alpha_{3}^{2}\alpha_{2} \{\partial^2_3f_2 + 2\partial_3\partial_2f_3 + 2f_3\partial_2f_3
+ 2f_3\partial_3f_2 + 2f_2\partial_3f_3\}\nonumber\\
& +\alpha_{1}\alpha_{2}\alpha_{3}\{4f_3\partial_1f_2 + 4f_2\partial_1f_3 - 2f_3\partial_2f_1 
- 2f_2\partial_3f_1 + 2\partial_3\partial_2f_1\},\nonumber\\
 a_{12}&= \alpha_{1}^{3}\{\partial_{1}\partial_2f_1 - \partial_{1}^{2}f_{2} - f_{1}\partial_{2}f_{1}\nonumber\\
&
+ 2f_{1}\partial_{1}f_{2}\}
+ \alpha_{2}^{2}\alpha_{1}\{\partial^2_2f_2 + 2f_2\partial_2f_2 - f^{3}_{2}\}\nonumber\\
&+\alpha_{3}^{2}\alpha_{1} \{-\partial^2_3f_2f_{3} - 2f_3\partial_2f_3 + 2f_3\partial_3f_2 + 2f_2\partial_3f_3
+ 2\partial_3\partial_2f_3 \}\nonumber\\
& +\alpha_{1}^{2}\alpha_{3}\{2\partial_1\partial_2f_3 - 2\partial_1\partial_3f_2 + \partial_3\partial_2f_1 
+ f_{2}\partial_3f_1 - 3f_3\partial_2f_1 + 4f_3\partial_1f_2\}\nonumber\\
& +\alpha_{1}^{2}\alpha_{2}\{\partial^2_2f_1 - 2f_{2}\partial_2f_1 + 4f_2\partial_1f_2\}
 +\alpha_{1}\alpha_{2}\alpha_{3}\{4f_2\partial_3f_2 + 2\partial_{2}^{2}f_3\},\nonumber
\end{align}
\begin{align}
 a_{13}& = \alpha_{1}^{3}\{\partial_{1}\partial_3f_1 - \partial_{1}^{2}f_{3} - f_{1}\partial_{3}f_{1} 
+ f_{1}\partial_{1}f_{3}\}\nonumber\\
&+ \alpha_{1}^{2}\alpha_{3}\{\partial^2_3f_1 + 3f_3\partial_1f_3 - f_3\partial_3f_1\}\nonumber\\
& + \alpha_{1}^{2}\alpha_{2}\{\partial_{2}\partial_3f_1 + 4f_2\partial_1f_3 - 2\partial_{2}\partial_1f_3 
- 3f_{2}\partial_3f_1 + f_{3}\partial_2f_1\}\nonumber\\
& + \alpha_{2}^{2}\alpha_{1}\{2f_{3}\partial_2f_2 + 2f_2\partial_2f_3 + \partial_{2}\partial_{3}f_{2}\}\nonumber\\
&
+ \alpha_{3}^{2}\alpha_{1}\{\partial_3^{2}f_3 - f_{3}\partial_3f_3 - f^{3}_{3} + 2f_{3}\partial_3f_3\}\nonumber\\
& + \alpha_{1}^{2}\alpha_{3}\{4f_3\partial_2f_3 - 2f_{2}\partial_3f_3 - f_3\partial_3f_2 + 2\partial^{2}_{3}f_{2}
- f^{2}_{3}f_{2}\}.\nonumber
\end{align}
Since the Ricci tensor of any affine Szab\'o connection is cyclic parallel, it follows  that $a_{11}=0$. Thus
the characteristic polynomial of the matrix associated to $S^{\nabla}(X)$ with respect to the basis
$\{\partial_{1},\partial_{2},\partial_{3}\}$ is equal to: 
$$
P_{\lambda}(S^{\nabla}(X))=-\lambda^{3}.
$$
 
We have the following result. 
\begin{theorem} \label{THeorDim3}
Let $M=\mathbb{R}^{3}$  and $\nabla$ be the torsion free affine connection, whose nonzero coefficients of the 
connection are given by
\begin{equation*}
 \nabla_{\partial_{1}}\partial_{1} = f_{1} \partial_{1},\;\;\nabla_{\partial_{1}}\partial_{2} = f_{2} \partial_{1}\;\;\mbox{and}\;\;\nabla_{\partial_{1}}\partial_{3} = f_{3} \partial_{1}.
\end{equation*} 
Then $(M,\nabla)$ is affine Szab\'o if and only if the Ricci tensor of $(M,\nabla)$ is cyclic parallel. 
\end{theorem}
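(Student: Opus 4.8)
The plan is to prove both implications by working directly with the explicit expression for the affine Szab\'o operator computed just above the statement. The key structural fact, already visible in that computation, is that for this family of connections the affine Szab\'o operator $S^{\nabla}(X)$ maps every basis vector $\partial_m$ into the line spanned by $\partial_1$; that is, its matrix in the basis $\{\partial_1,\partial_2,\partial_3\}$ has the form
\begin{equation*}
S^{\nabla}(X)=\begin{pmatrix} a_{11} & a_{12} & a_{13}\\ 0 & 0 & 0\\ 0 & 0 & 0\end{pmatrix},
\end{equation*}
where $a_{11},a_{12},a_{13}$ are the homogeneous-degree-$3$ polynomials in $\alpha_1,\alpha_2,\alpha_3$ displayed above. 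The characteristic polynomial of such a matrix is $P_\lambda(S^{\nabla}(X))=-\lambda^2(\lambda-a_{11})$. Hence $(M,\nabla)$ is affine Szab\'o at $p$ (equivalently, by the Theorem and Corollary preceding this statement, $P_\lambda\equiv-\lambda^3$, i.e. $S^{\nabla}(X)$ is nilpotent for all $X$) if and only if $a_{11}=0$ for every choice of $(\alpha_1,\alpha_2,\alpha_3)$, i.e. identically as a polynomial.

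For the forward direction, suppose $(M,\nabla)$ is affine Szab\'o. Then by the third Corollary in Section~4 the Ricci tensor is cyclic parallel; this is the "only if" part and requires nothing beyond quoting that corollary. For the converse, suppose the Ricci tensor is cyclic parallel. By equation~(\ref{RicciCycP2}) together with the local form of the cyclic-parallel condition, $\sum_{i,j,k}\alpha_i\alpha_j\alpha_k(\nabla_{\partial_i}Ric^{\nabla})_{jk}=0$ for all $(\alpha_i)$; comparing this polynomial identity coefficientwise with the nine-term expression for $a_{11}$, one checks that $a_{11}$ is \emph{exactly} the polynomial $(\nabla_X Ric^{\nabla})(X,X)$ written out via Proposition~\ref{proposition} — indeed the nine bracketed expressions multiplying $\alpha_3^3,\alpha_2^3,\alpha_3^2\alpha_1,\dots,\alpha_1\alpha_2\alpha_3$ in $a_{11}$ coincide termwise with the nine left-hand sides of the PDE system in Proposition~\ref{proposition}. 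Therefore cyclic parallelism of $Ric^{\nabla}$ forces each of those nine coefficients to vanish, hence $a_{11}\equiv 0$, hence $S^{\nabla}(X)$ is nilpotent for every $X$ and $(M,\nabla)$ is affine Szab\'o.

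The only genuine verification needed is the identification $a_{11}=(\nabla_X Ric^{\nabla})(X,X)$, which in turn rests on the general identity $\mathrm{trace}\,(Y\mapsto(\nabla_X\mathcal{R}^{\nabla})(Y,X)X)=(\nabla_X Ric^{\nabla})(X,X)$ recorded in Section~2: since every column of the matrix of $S^{\nabla}(X)$ is a multiple of $\partial_1$, its trace is simply $a_{11}$, and on the other hand that trace equals $(\nabla_X Ric^{\nabla})(X,X)$, which by~(\ref{RicciCycP2}) and Proposition~\ref{proposition} is the stated degree-$3$ polynomial. So the main (and essentially only) obstacle is bookkeeping: confirming that the curvature and Ricci components listed for connection~(\ref{CoefCon1}) indeed produce, after applying $\nabla_{\partial_i}$ and contracting, precisely the coefficients appearing in $a_{11}$ — a finite, routine but somewhat lengthy computation that the trace identity reduces to a single scalar check rather than a full matrix computation. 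Once that is in hand, both directions follow immediately, and no further analysis of $a_{12}$ or $a_{13}$ is required because the block structure of $S^{\nabla}(X)$ makes them irrelevant to the characteristic polynomial.
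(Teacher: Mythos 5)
Your proposal is correct and follows essentially the same route as the paper: both arguments rest on the observation that the matrix of $S^{\nabla}(X)$ in the basis $\{\partial_1,\partial_2,\partial_3\}$ has only its first row nonzero, so the characteristic polynomial is determined by $a_{11}$ alone, and $a_{11}$ vanishes identically precisely when the nine coefficients coincide with the cyclic-parallelism system of Proposition~\ref{proposition}, with the converse direction supplied by the corollary that affine Szab\'o implies cyclic parallel Ricci. Your additional use of the trace identity $\mathrm{trace}\,S^{\nabla}(X)=(\nabla_X Ric^{\nabla})(X,X)$ to identify $a_{11}$ with the cyclic-parallel condition is a tidy shortcut for the bookkeeping the paper does by direct comparison, but it does not change the substance of the argument.
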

From Theorem \ref{THeorDim3}, one can construct examples of affine Szab\'o connections. 
\begin{example}{\rm
The following affine connections on $\mathbb{R}^3$ whose non-zero Christoffel symbols are given by:
\begin{enumerate}
 \item $\nabla_{\partial_{1}}\partial_{1}=0, \quad \nabla_{\partial_{1}}\partial_{2}=-x_3\partial_1,
 \quad \nabla_{\partial_{1}}\partial_{3}=x_2\partial_1$;
 \item $\nabla_{\partial_{1}}\partial_{1}=x_1\partial_1, \quad \nabla_{\partial_{1}}\partial_{2}=2x_3\partial_1,
 \quad \nabla_{\partial_{1}}\partial_{3}=-2x_2\partial_1$;
\end{enumerate}
are affine Szab\'o.}
\end{example}
Note that the result in Theorem \ref{THeorDim3} remains the same if the affine connection $\nabla$ has non-zero components 
$\nabla_{\partial_{1}}\partial_{1}$, $\nabla_{\partial_{1}}\partial_{2}$ and $\nabla_{\partial_{1}}\partial_{3}$ in the same 
direction of the element of the basis $\{\partial_{i}\}_{i=1.2.3}$.

The affine manifolds in Theorem \ref{THeorDim3} are also called $L_3$-spaces, and Therefore, are d'Atri spaces. We refer to
\cite{ks} for a further discussion of D'Atri spaces.

\subsection{Second Family of affine Szab\'o connection}

Let us consider the affine connection defined in (\ref{CoefCon2}), i.e.,
\begin{equation*}
\nabla_{\partial_{1}}\partial_{1}=f_{1}\partial_{2},\quad
\nabla_{\partial_{2}}\partial_{2}=f_{2}\partial_{3}\quad \mbox{and} \quad
\nabla_{\partial_{3}}\partial_{3}=f_{3}\partial_{1}
\end{equation*}
where the smooth functions $f_i=f_i(x_1,x_2,x_3)$, for $i=1,2,3$,  are Christoffel symbols. Since the Ricci tensor of any affine Szab\'o connection is
cyclic parallel, it follows from the Proposition \ref{PropoNewfunc}, that the matrix associated to the affine Szab\'o operator 
with respect to the basis $\{\partial_{1},\partial_{2},\partial_{3}\}$ is reduced to
\begin{eqnarray*}
(S^{\nabla})(X)=
\left(\begin{array}{ccc}
0&b_{12}&b_{13}\\                                              
b_{21}&0&b_{23}\\                                              
b_{31}&b_{32}&0                                              
\end{array}
\right)
\end{eqnarray*}
with
\begin{align*}
b_{12}&= \alpha^{2}_{1}\alpha_3(-\partial_{1}\partial_{3}f_{1})
+\alpha_1\alpha^{2}_{2}(f_{2}\partial_{3}f_{1})
+ \alpha_1\alpha^{2}_{3}(f_{3}\partial_{1}f_{1} - \partial_{3}^{2}f_{1})\\
&+ \alpha^{2}_{2}\alpha_3(-2f_{2}f_{3}f_{1})
+\alpha_2\alpha^{2}_{3}(f_{1}\partial_{2}f_{3})
+\alpha^{3}_{3}(2f_{3}\partial_{3}f_{1}+f_{1}\partial_{3}f_{3});\\
b_{13}&= \alpha^{2}_{1}\alpha_2 (-2f_{1}\partial_{1}f_{2}-f_{2}\partial_{1}f_{1})
+\alpha_1\alpha^{2}_{2}(\partial_{1}^{2}f_{2}-f_{1}\partial_{2}f_{2})\\
&+\alpha_1\alpha_2\alpha_3(-2f_2\partial_{3}f_{1}) + \alpha^{3}_{2}(\partial_{2}\partial_{1}f_{2})
+\alpha_2\alpha^{2}_{3}(2f_{2}f_{3}f_{1});\\
b_{21}&= \alpha^{2}_{1}\alpha_3(2f_{3}f_{2}f_{1})
+\alpha_1\alpha_2\alpha_3(-2f_{3}\partial_{1}f_{2})
+\alpha^{2}_{2}\alpha_3(-2f_{2}\partial_{2}f_{3}-f_{3}\partial_{2}f_{2})\\
&+ \alpha_2\alpha^{2}_{3}(\partial_{2}^{2}f_{3}-f_2\partial_{3}f_{3})
+\alpha^{3}_{3}(\partial_{3}\partial_{2}f_{3});
\end{align*}
\begin{align*}
b_{23}&= \alpha^{3}_{1}(2f_{1}\partial_{1}f_{2}+f_{2}\partial_{1}f_{1})
+\alpha^{2}_{1}\alpha_2(-\partial_{1}^{2}f_{2}+f_{1}\partial_{2}f_{2})
+\alpha^{2}_{1}\alpha_3(f_{2}\partial_{3}f_{1})\\
&+ \alpha_1\alpha^{2}_{2}(-\partial_{2}\partial_{1}f_{2})
+\alpha_1\alpha^{2}_{3}(-2f_{2}f_{3}f_{1})
+\alpha_2\alpha^{2}_{3}(f_{3}\partial_{1}f_{2});\\
b_{31}&= \alpha^{2}_{1}\alpha_2(-2f_{1}f_{2}f_{3})
+\alpha^{2}_{1}\alpha_3(f_{1}\partial_{2}f_{3})
+\alpha_1\alpha^{2}_{2}(f_{3}\partial_{1}f_{2})\\
&+ \alpha^{3}_{2}(2f_{2}\partial_{2}f_{3}+f_{3}\partial_{2}f_{2})
+\alpha^{2}_{2}\alpha_3(-\partial_{2}^{2}f_{3}+f_{2}\partial_{3}f_{3})
+\alpha_2\alpha^{2}_{3}(-\partial_{3}\partial_{2}f_{3});\\
b_{32}&= \alpha^{3}_{1}(\partial_{1}\partial_{3}f_{1})
+\alpha^{2}_{1}\alpha_3(-f_{3}\partial_{1}f_{1}+\partial_{3}^{2}f_{1})
+\alpha_1\alpha^{2}_{2}(2f_{1}f_{3}f_{2})\\
&+ \alpha_1\alpha_2\alpha_3(-2f_1\partial_{2}f_{3})
+\alpha_1\alpha^{2}_{3}(-2f_{3}\partial_{3}f_{1}-f_{1}\partial_{3}f_{3}).
\end{align*}
The characteristic polynomial of the affine Szab\'o operator is now seen to be:
\begin{eqnarray*}
P[S^{\nabla}(X)] (\lambda) =- \lambda^{3}
 +(b_{12}b_{21} +b_{23}b_{32} + b_{13}b_{31})\lambda
 +(b_{12}b_{23}b_{31} + b_{13}b_{21}b_{32}).
\end{eqnarray*}
It follows that the affine connection given by (\ref{CoefCon2}) is affine Szab\'o if and only if:
\begin{eqnarray*}
 b_{12}b_{21} +b_{23}b_{32} + b_{13}b_{31} =0 \quad \mbox{and}\quad b_{12}b_{23}b_{31} + b_{13}b_{21}b_{32}=0.
\end{eqnarray*}
A straightforward calculation shows that: $b_{12}b_{23}b_{31} + b_{13}b_{21}b_{32}=0$. Then
$S^{\nabla}(X)$ has eigenvalue zero if and only if:
\begin{equation}\label{EquationCond}
 b_{12}b_{21} +b_{23}b_{32} + b_{13}b_{31} =0.
\end{equation}
\begin{enumerate}
\item Assume $f_1 =0$. Then, the relation (\ref{EquationCond}) reduces to:
\begin{eqnarray*}
  b_{13}b_{31} =0.
\end{eqnarray*}
\begin{enumerate}
 \item If $\partial_1 f_2 =0$, then $f_2 = u(x_2)$ and $f_3= v(x_2)+ t(x_3)$.
 \item If $\partial_1 f_2 \neq 0$, then $f_3 = 0$.
\end{enumerate}
\item Assume $f_2=0$, then we have
\begin{eqnarray*}
  b_{12}b_{21} =0.
\end{eqnarray*}
\begin{enumerate}
 \item If $\partial_2 f_3 =0$, then $f_3 = t(x_3)$ and $f_1= f(x_1)+g(x_3)$.
 \item If $\partial_2 f_3 \neq 0$, then $f_1 = 0$.
\end{enumerate}
\item Assume $f_3=0$, then we have
\begin{eqnarray*}
  b_{23}b_{32} =0.
\end{eqnarray*}
\begin{enumerate}
 \item If $\partial_3 f_1 =0$, then $f_1 = f(x_1)$ and $f_2= h(x_1)+k(x_2)$.
 \item If $\partial_3 f_1 \neq 0$, then $f_2 = 0$.
\end{enumerate}
\end{enumerate}
We have the following result.
\begin{theorem}\label{THeor2Dim3}
 Let $M=\mathbb{R}^{3}$  and $\nabla$ be the torsion free affine connection, whose non-zero coefficients of the 
connection are given by
\begin{equation*}
 \nabla_{\partial_{1}}\partial_{1} = f_{1}\partial_{2},\;\;\nabla_{\partial_{2}}\partial_{2} = f_{2}\partial_{3}\;\;\mbox{and}\;\;\nabla_{\partial_{3}}\partial_{3} = f_{3}\partial_{1}.
\end{equation*} 
Then $(M,\nabla)$ is affine Szab\'o if at least one of the following conditions holds:
\begin{enumerate}
 \item $f_1 =0$, $f_2=u(x_2)$ and $f_3 = v(x_2) + t(x_3)$.
 \item $f_2 =0$, $f_3=t(x_3)$ and $f_1 = f(x_1) + g(x_3)$.
 \item $f_3 =0$, $f_1=f(x_1)$ and $f_2 = h(x_1) + u(x_2)$.
\end{enumerate}
Or at least one of the following conditons holds:
\begin{enumerate}
 \item $f_1 =0$, $f_2 = f(x_1) + g(x_2)$ and $f_3 = 0$.
 \item $f_2 =0$, $f_3=v(x_2) + t(x_3)$ and $f_1 = 0$.
 \item $f_3 =0$, $f_1=f(x_1) + g(x_3)$ and $f_2 = 0$.
\end{enumerate}
\end{theorem}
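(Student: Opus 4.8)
The plan is to read the statement off the explicit matrix of the affine Szab\'o operator $S^{\nabla}(X)$ assembled just above: for $X=\sum_{i}\alpha_{i}\partial_{i}$ this is the $3\times 3$ matrix with zero diagonal and off-diagonal entries $b_{12},b_{13},b_{21},b_{23},b_{31},b_{32}$, each a homogeneous cubic in $(\alpha_{1},\alpha_{2},\alpha_{3})$ whose coefficients are universal polynomials in the $f_{i}$ and their first and second partial derivatives. Every one of the six triples in the statement is a particular instance of the additive form supplied by Proposition~\ref{PropoNewfunc}, so the Ricci tensor is automatically cyclic parallel, that matrix form is legitimate, and $(M,\nabla)$ is affine Szab\'o exactly when its characteristic polynomial
\[
P[S^{\nabla}(X)](\lambda)=-\lambda^{3}+\bigl(b_{12}b_{21}+b_{23}b_{32}+b_{13}b_{31}\bigr)\lambda+\bigl(b_{12}b_{23}b_{31}+b_{13}b_{21}b_{32}\bigr)
\]
reduces to $-\lambda^{3}$ for all $X$, i.e. exactly when the two symmetric expressions $b_{12}b_{21}+b_{23}b_{32}+b_{13}b_{31}$ and $b_{12}b_{23}b_{31}+b_{13}b_{21}b_{32}$ vanish identically in $\alpha_{1},\alpha_{2},\alpha_{3}$. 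The task is therefore to verify this vanishing for each of the six triples.

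The lever is the cyclic symmetry $\partial_{1}\mapsto\partial_{2}\mapsto\partial_{3}\mapsto\partial_{1}$, $f_{1}\mapsto f_{2}\mapsto f_{3}\mapsto f_{1}$, $x_{1}\mapsto x_{2}\mapsto x_{3}\mapsto x_{1}$ of the connection \eqref{CoefCon2}: one checks it permutes the coefficients by $b_{12}\mapsto b_{23}\mapsto b_{31}\mapsto b_{12}$ and $b_{13}\mapsto b_{21}\mapsto b_{32}\mapsto b_{13}$, hence fixes both symmetric expressions above. It is thus enough to treat the representative hypothesis $f_{1}\equiv 0$ and obtain the remaining five cases by applying the symmetry once or twice. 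When $f_{1}\equiv 0$, inspection of the formulas for $b_{12}$ and $b_{32}$ shows that every monomial in them contains a factor $f_{1}$ or a derivative of $f_{1}$, so $b_{12}=b_{32}=0$; then $b_{12}b_{21}$, $b_{23}b_{32}$, $b_{12}b_{23}b_{31}$ and $b_{13}b_{21}b_{32}$ all vanish, and the Szab\'o condition collapses to the specialization $b_{13}b_{31}\equiv 0$ of \eqref{EquationCond}. With $f_{1}\equiv0$ one has $b_{13}=\alpha_{1}\alpha_{2}^{2}\,\partial_{1}^{2}f_{2}+\alpha_{2}^{3}\,\partial_{1}\partial_{2}f_{2}$, while the $\alpha_{1}\alpha_{2}^{2}$-coefficient of $b_{31}$ is $f_{3}\,\partial_{1}f_{2}$; hence imposing $\partial_{1}f_{2}=0$ (equivalently $f_{2}=u(x_{2})$, with $f_{3}=v(x_{2})+t(x_{3})$ the leftover Proposition~\ref{PropoNewfunc} freedom) forces $b_{13}=0$, whereas, if $\partial_{1}f_{2}\not\equiv0$, the vanishing of that coefficient forces $f_{3}=0$ and then $b_{31}=0$. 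These are precisely cases~(1) of the two groups in the statement, and applying the cyclic symmetry once and twice produces cases~(2) and~(3) of each group.

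Finally one substitutes each of the six triples back into the $b_{ij}$: in every case at least two of the six coefficients vanish identically, so both symmetric expressions are trivially zero and $(M,\nabla)$ is affine Szab\'o. The only computationally heavy ingredients are the inputs — producing the explicit cubics $b_{ij}$ from $S^{\nabla}(X)\partial_{m}=\sum_{i,j,k}\alpha_{i}\alpha_{j}\alpha_{k}(\nabla_{\partial_{i}}\mathcal{R}^{\nabla})(\partial_{m},\partial_{j})\partial_{k}$ and, for the general characterization, checking the ancillary identity $b_{12}b_{23}b_{31}+b_{13}b_{21}b_{32}\equiv0$ on the whole Proposition~\ref{PropoNewfunc} family. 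Once these are in hand the case analysis is short, and the cyclic symmetry removes two thirds of it; I expect the term-by-term bookkeeping in those expansions, rather than any conceptual difficulty, to be the main obstacle.
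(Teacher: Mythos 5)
Your proposal is correct and follows essentially the same route as the paper: reduce the Szab\'o operator to the zero-diagonal matrix $(b_{ij})$ via the cyclic-parallelism condition of Proposition~\ref{PropoNewfunc}, note that the characteristic polynomial reduces to $-\lambda^{3}$ exactly when $b_{12}b_{21}+b_{23}b_{32}+b_{13}b_{31}$ and $b_{12}b_{23}b_{31}+b_{13}b_{21}b_{32}$ vanish, and then check case by case (the paper's cases $f_{1}=0$, $f_{2}=0$, $f_{3}=0$) that each of the six families kills enough of the $b_{ij}$ to make both expressions vanish identically. Your only deviation is the explicit cyclic-symmetry observation that lets you treat the representative case $f_{1}=0$ and obtain the other two by relabeling, which is a harmless labor-saving refinement of the paper's argument rather than a different method.
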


From Theorem \ref{THeor2Dim3}, one can construct examples of affine Szab\'o connections. As an example, we have the following.

\begin{example}{\rm
The following connections on $\mathbb{R}^3$ whose non-zero Christoffel symbols are given by:
\begin{enumerate}
 \item $\nabla_{\partial_{1}}\partial_{1}=0, \quad \nabla_{\partial_{2}}\partial_{2}=x_2\partial_3,
 \quad \nabla_{\partial_{3}}\partial_{3}=(x_2+x^{2}_{3})\partial_1$;
 \item $\nabla_{\partial_{1}}\partial_{1}=x^{2}_{1}\partial_2, \quad \nabla_{\partial_{2}}\partial_{2}=(x_1+x_2)\partial_3,
 \quad \nabla_{\partial_{3}}\partial_{3}=0$;
\end{enumerate}
are affine Szab\'o.}
\end{example}

\begin{remark}{\rm
The affine connection defined in Example \ref{ExamplImport} has a Ricci tensor which is cyclic parallel but it is not affine Szab\'o. This means that the manifold defined in Example \ref{ExamplImport} is an $L_{3}$-space but not an affine Szab\'o manifold.}
\end{remark}

One has also the following observation.

\begin{theorem}
 Let $(M_1,\nabla_1)$ be an affine Szab\'o at $p_1 \in M_1$ and $(M_2,\nabla_2)$ be an affine Szab\'o at $p_2 \in M_2$.
 Then the product manifold $(M,\nabla):=(M_1\times M_2, \nabla \oplus \nabla_2)$ is affine Szab\'o at $p=(p_1,p_2)$.
\end{theorem}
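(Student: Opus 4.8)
The plan is to reduce everything to the characterization established above: $(M,\nabla)$ is affine Szab\'o at $p$ if and only if the characteristic polynomial of $S^{\nabla}(X)$ equals $\lambda^{n}$ for every $X\in T_{p}M$, equivalently (by the corollary) if and only if every $S^{\nabla}(X)$ is nilpotent. Since $S^{\nabla}(X)$ depends only on the value of $X$ at the point (the tensor $\nabla\mathcal{R}^{\nabla}$ being evaluated pointwise), it suffices to show that, with respect to the canonical splitting $T_{p}M=T_{p_{1}}M_{1}\oplus T_{p_{2}}M_{2}$, the operator $S^{\nabla}(X)$ is block diagonal with blocks $S^{\nabla_{1}}(X_{1})$ and $S^{\nabla_{2}}(X_{2})$, where $X=X_{1}+X_{2}$ is the corresponding decomposition of $X$.

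First I would record the structure of the product connection $\nabla=\nabla_{1}\oplus\nabla_{2}$: for projectable vector fields written as $X=X_{1}+X_{2}$ and $Y=Y_{1}+Y_{2}$ along the two factors, one has $\nabla_{X}Y=\nabla^{1}_{X_{1}}Y_{1}+\nabla^{2}_{X_{2}}Y_{2}$, all mixed terms vanishing because $\nabla_{X_{1}}Y_{2}=0=\nabla_{X_{2}}Y_{1}$ for lifts and $[X_{i},Y_{j}]=0$ for $i\neq j$. Iterating this, the curvature operator splits, $\mathcal{R}^{\nabla}(X,Y)Z=\mathcal{R}^{\nabla_{1}}(X_{1},Y_{1})Z_{1}+\mathcal{R}^{\nabla_{2}}(X_{2},Y_{2})Z_{2}$, and likewise its covariant derivative, $(\nabla_{X}\mathcal{R}^{\nabla})(Y,Z)W=(\nabla^{1}_{X_{1}}\mathcal{R}^{\nabla_{1}})(Y_{1},Z_{1})W_{1}+(\nabla^{2}_{X_{2}}\mathcal{R}^{\nabla_{2}})(Y_{2},Z_{2})W_{2}$. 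In particular these tensors preserve the decomposition of the tangent space and annihilate any argument mixing the two factors.

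Applying this to $S^{\nabla}(X)Y=(\nabla_{X}\mathcal{R}^{\nabla})(Y,X)X$ with $X=X_{1}+X_{2}$, $X_{i}\in T_{p_{i}}M_{i}$, and $Y=Y_{1}+Y_{2}$, I get $S^{\nabla}(X)Y=(\nabla^{1}_{X_{1}}\mathcal{R}^{\nabla_{1}})(Y_{1},X_{1})X_{1}+(\nabla^{2}_{X_{2}}\mathcal{R}^{\nabla_{2}})(Y_{2},X_{2})X_{2}=S^{\nabla_{1}}(X_{1})Y_{1}+S^{\nabla_{2}}(X_{2})Y_{2}$, i.e. $S^{\nabla}(X)=S^{\nabla_{1}}(X_{1})\oplus S^{\nabla_{2}}(X_{2})$. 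Hence $P_{\lambda}\big(S^{\nabla}(X)\big)=P_{\lambda}\big(S^{\nabla_{1}}(X_{1})\big)\cdot P_{\lambda}\big(S^{\nabla_{2}}(X_{2})\big)$. Since $(M_{1},\nabla_{1})$ is affine Szab\'o at $p_{1}$ and $(M_{2},\nabla_{2})$ is affine Szab\'o at $p_{2}$, the two factors equal $\lambda^{n_{1}}$ and $\lambda^{n_{2}}$ respectively, so $P_{\lambda}\big(S^{\nabla}(X)\big)=\lambda^{n_{1}+n_{2}}=\lambda^{\dim M}$ for every $X$, and the characterization theorem gives the claim.

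The only genuine work is the bookkeeping in the middle paragraph: verifying that the product connection's curvature and its first covariant derivative really carry no mixed components. This is routine given the vanishing of cross terms for the product connection, but it is the step on which the whole argument rests and should be written out with care; once it is in place, the conclusion about characteristic polynomials is immediate from multiplicativity over a block-diagonal decomposition.
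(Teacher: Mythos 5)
Your proposal is correct and takes essentially the same route as the paper: both arguments hinge on the block splitting $S^{\nabla}(X)=S^{\nabla_{1}}(X_{1})\oplus S^{\nabla_{2}}(X_{2})$ and then conclude from nilpotency, the paper via the spectrum $\{0\}\cup\{0\}=\{0\}$ and you via the factorization $P_{\lambda}(S^{\nabla}(X))=\lambda^{n_{1}}\cdot\lambda^{n_{2}}$. Your middle paragraph merely spells out the splitting of $\nabla\mathcal{R}^{\nabla}$ for the product connection, which the paper asserts without proof; this is a harmless (indeed welcome) addition, not a different method.
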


\begin{proof}
 Let $X=(X_1,X_2)\in T_{(p_1,p_2)} (M_1\times M_2)$ with $X_1\in T_{p_1} M_1$ and $X_2\in T_{p_2} M_2$. Then we have
$
  S^{\nabla}(X) = S^{\nabla_1}(X_1)\oplus S^{\nabla_2}(X_2).
$
So
$
 Spect\{S^{\nabla}(X)\}  = Spect\{S^{\nabla_1}(X_1)\} \cup Spect\{S^{\nabla_2}(X_2)\}  = \{0\}\cup \{0\} = \{0\}.
$
\end{proof}

Affine Szab\'o connections are of interest not only in affine geometry, but also in the study of Pseudo-Riemannian Szab\'o metrics
since they provide some examples without Riemannian analogue by means of the Riemannian extensions.

\section{Riemannian extension construction}

Let $\nabla$ be a torsion free affine connection on an $n$-dimensional affine manifold $M$ and
$T^*M$ be the cotangent bundle of $(M,\nabla)$. In the locally induced coordinates $(u_i,u_{i'})$ on 
$\pi^{-1}(U)\subset T^* M$, the \textit{Riemannian extension} $g_{\nabla}$ is the 
pseudo-Riemannian metric on $T^*M$ of neutral signature $(n,n)$ defined by  
\begin{eqnarray}
 g_{\nabla}= 
\left(
      \begin{array}{cc}
      -2u_{k'}\Gamma^{k}_{ij}&\delta^{j}_{i}\\
      \delta^{j}_{i}&0
         \end{array}
\right),
\end{eqnarray}
with respect to $\partial_1,\cdots,\partial_n,\partial_{1'},\cdots,\partial_{n'} (i,j,k=1,\cdots,n;k'=k+n)$, where $\Gamma^{k}_{ij}$ are the Christoffel symbols of the torsion free affine connection $\nabla$ with respect to $(U,u_i)$.

Let $(M,\nabla)$ be an $3$-dimensional affine manifold. Let $(x_1,x_2,x_3)$ be local coordinates on $M$. We expand
$\nabla_{\partial_i} \partial_j = \sum_k \Gamma_{ij}^{k}\partial_k$ for $i,j,k=1,2,3$ to define the Christoffel symbols  $\Gamma_{ij}^{k}$of
$\nabla$. If $\omega \in T^* M$, we expand $\omega=x_4dx_i + x_5dx_2 + x_6dx_3$ to define the dual fiber coordinates
$(x_4,x_5,x_6)$ thereby obtain canonical local coordinates $(x_1,x_2,x_3,x_4,x_5,x_6)$ on $T^* M$. The Riemannian
extension in the metric of neutral signature $(3,3)$ on the cotangent bundle $T^* M$ given localy by
\begin{eqnarray*}
 g_{\nabla}(\partial_1,\partial_4)&=& g_{\nabla}(\partial_2,\partial_5)=g_{\nabla}(\partial_3,\partial_6)=1,\\
 g_{\nabla}(\partial_1,\partial_1)&=& -2x_4\Gamma_{11}^{1}-2x_5\Gamma_{11}^{2}-2x_6\Gamma_{11}^{3},\\
 g_{\nabla}(\partial_1,\partial_2)&=& -2x_4\Gamma_{12}^{1}-2x_5\Gamma_{12}^{2}-2x_6\Gamma_{12}^{3},\\
 g_{\nabla}(\partial_1,\partial_3)&=& -2x_4\Gamma_{13}^{1}-2x_5\Gamma_{13}^{2}-2x_6\Gamma_{13}^{3},\\
 g_{\nabla}(\partial_2,\partial_2)&=& -2x_4\Gamma_{22}^{1}-2x_5\Gamma_{22}^{2}-2x_6\Gamma_{22}^{3},\\
 g_{\nabla}(\partial_2,\partial_3)&=& -2x_4\Gamma_{23}^{1}-2x_5\Gamma_{23}^{2}-2x_6\Gamma_{23}^{3},\\
 g_{\nabla}(\partial_3,\partial_3)&=& -2x_4\Gamma_{33}^{1}-2x_5\Gamma_{33}^{2}-2x_6\Gamma_{33}^{3}.
 \end{eqnarray*}

\begin{lemma}
Let $(M,\nabla)$ be an $n$-dimensional affine manifold and $(T^* M, g_{\nabla})$ be the cotangent bundle with
the twisted Riemannian extension. Then, we have 
\begin{eqnarray*}
  Spect\{\tilde{\mathcal{S}}(\tilde{X})\} = Spect\{\mathcal{S}^{\nabla}(X)\}
\end{eqnarray*}
\end{lemma}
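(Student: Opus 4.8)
The plan is to compute the Szab\'o operator of the Riemannian extension $(T^*M, g_\nabla)$ at a point $\tilde{p} = (p,\omega)$ lying over $p\in M$, and to compare it directly with the affine Szab\'o operator $\mathcal{S}^\nabla(X)$ at $p$. First I would recall the standard facts about Riemannian extensions: the Levi-Civita connection $\tilde\nabla$ of $g_\nabla$ has Christoffel symbols expressible in terms of the $\Gamma^k_{ij}$ and their first derivatives, with the ``horizontal-horizontal'' block of $\tilde\nabla_{\partial_i}\partial_j$ projecting down to $\nabla_{\partial_i}\partial_j$, while all components involving a fiber index $\partial_{i'}$ are either zero or lower-order. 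Consequently the curvature tensor $\tilde{\mathcal{R}}$ and its covariant derivative $\tilde\nabla\tilde{\mathcal{R}}$ have a strongly triangular structure with respect to the splitting $T(T^*M) = \mathcal{H}\oplus\mathcal{V}$ into horizontal and vertical subbundles: the only ``new'' curvature beyond the lift of $\mathcal{R}^\nabla$ lands in the vertical directions, and the vertical distribution is totally isotropic and $\tilde{\mathcal{R}}$-invariant in the appropriate sense.

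Next, for a horizontal lift $\tilde X$ of $X=\sum_i\alpha_i\partial_i$, I would write the matrix of $\tilde{\mathcal{S}}(\tilde X) = (\tilde\nabla_{\tilde X}\tilde{\mathcal{R}})(\cdot,\tilde X)\tilde X$ in the frame $\{\partial_1,\dots,\partial_n,\partial_{1'},\dots,\partial_{n'}\}$. The block structure should come out as
\begin{eqnarray*}
\tilde{\mathcal{S}}(\tilde X) =
\left(
\begin{array}{cc}
A & 0\\
B & A^{t}
\end{array}
\right),
\end{eqnarray*}
where $A$ is precisely the matrix of $\mathcal{S}^\nabla(X)$ acting on $T_pM$ (because the horizontal-to-horizontal part of $\tilde\nabla\tilde{\mathcal R}$ is the pullback of $\nabla\mathcal R^\nabla$), the lower-right block is forced to be $A^t$ (up to sign) by the $g_\nabla$-self-adjointness of the Szab\'o operator together with the off-diagonal pairing $g_\nabla(\partial_i,\partial_{j'})=\delta_{ij}$, and $B$ collects the genuinely new vertical contributions. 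For a general tangent vector at $\tilde p$ one adds a vertical part $\tilde X = \widetilde{X}^{\mathcal H} + \xi$ with $\xi\in\mathcal V$; since $\mathcal V$ is null and $\tilde{\mathcal R}$-invariant, the extra terms only modify $B$ and leave the diagonal blocks unchanged. Then
\begin{eqnarray*}
\mathrm{Spect}\{\tilde{\mathcal S}(\tilde X)\} = \mathrm{Spect}(A)\cup\mathrm{Spect}(A^{t}) = \mathrm{Spect}\{\mathcal S^\nabla(X)\},
\end{eqnarray*}
as sets (indeed each eigenvalue of $\mathcal S^\nabla(X)$ occurs with doubled multiplicity), which is the claim.

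The main obstacle is the bookkeeping in the first step: one must verify carefully that no first- or second-derivative terms of the $\Gamma^k_{ij}$ ``leak'' from the vertical block $B$ back into the diagonal block $A$ when one differentiates the curvature tensor, i.e.\ that the triangular block form is preserved under $\tilde\nabla_{\tilde X}$ and under contracting twice with $\tilde X$. This is where the specific form of the Riemannian extension metric -- and in particular the vanishing of the $\partial_{i'}\partial_{j'}$ block and the fact that the $\Gamma$'s depend only on the base coordinates $(x_1,\dots,x_n)$ -- is essential; it guarantees that $\mathcal V$ is parallel along horizontal directions modulo $\mathcal V$ itself, so the horizontal quotient dynamics are governed entirely by $\nabla$. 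Once that structural claim is in place, the identification $A = \mathcal S^\nabla(X)$ and the spectral conclusion are immediate, and the computation for a vector with nonzero vertical component adds nothing new to the spectrum.
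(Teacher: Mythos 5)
Your proposal follows essentially the same route as the paper: compute the Levi-Civita connection and curvature of $g_{\nabla}$ in terms of the $\Gamma^{k}_{ij}$, observe that in the frame $\{\partial_{i},\partial_{i'}\}$ the Szab\'o operator of the extension takes the block lower-triangular form with diagonal blocks $\mathcal{S}^{\nabla}(X)$ and ${}^{t}\mathcal{S}^{\nabla}(X)$, and read off the spectrum. The only cosmetic difference is that you first treat horizontal lifts and then argue the vertical part of $\tilde{X}$ only affects the off-diagonal block, whereas the paper works directly with a general $\tilde{X}=\alpha_{i}\partial_{i}+\alpha_{i'}\partial_{i'}$; the conclusion and mechanism are identical.
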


\begin{proof}
Let $\Gamma_{ij}^{k}$ be the Christoffel symbols of $\nabla$. The non-zero Christoffel symbols $\tilde{\Gamma}_{\alpha \beta}^{\gamma}$ of the Levi-Civita connection of
$g_{\nabla}$ are given by:
\begin{eqnarray*}
 \tilde{\Gamma}^{k}_{ij} &=& \Gamma^{k}_{ij}, \quad \tilde{\Gamma}^{k'}_{i'j} = -\Gamma^{i}_{jk}\quad
 \tilde{\Gamma}^{k'}_{ij'} = -\Gamma^{j}_{ik},\\
 \tilde{\Gamma}^{k'}_{ij} &=& \sum_r x_{r'}\Big(\partial_k \Gamma^{r}_{ij} - \partial_i \Gamma^{r}_{jk}
 -\partial_j \Gamma^{r}_{ik} + 2 \sum_l \Gamma^{r}_{kl}\Gamma^{l}_{ij}\Big);
 \end{eqnarray*}
where $(i,j,k,l,r=1,\cdots,n)$ and $(i'=i+n,j'=j+n,k'=k+n,r'=r+n)$. The non-zero components of the curvature tensor of 
$(T^*M,g_{\nabla})$ up to the usual symmetries are given as follows: we omit $\tilde{R}_{kji}^{h'}$, as it plays no role 
in our considerations.
\begin{eqnarray*}
 \tilde{R}_{kji}^{h}&=& R_{kji}^{h}, \quad \tilde{R}_{kji}^{h'}, \quad \tilde{R}_{kji'}^{h'}=-R_{kjh}^{i},\\
 \tilde{R}_{k'ji}^{h'} &=& R_{hij}^{k};
\end{eqnarray*}
where $R_{kji}^{h}$ are the components of the curavture tensor of $(M,\nabla)$.
Let $\tilde{X}=\alpha_i \partial_i + \alpha_{i'}\partial_{i'}$ and $\tilde{Y}=\beta_i \partial_i + \beta_{i'}\partial_{i'}$
be vectors fields on $T^* M$. Let $X=\alpha_i \partial_i$ and $Y=\beta_i\partial_i$ be the correspoding vectors
fields on $M$. Let $\mathcal{S}^{\nabla}(X)$ be the matrix of the affine Szab\'o operator on $M$ relative
to the basis $\{\partial_i\}$. Then the matrix of the Szab\'o operator $\tilde{\mathcal{S}}(\tilde{X})$ with
respect to the basis $\{\partial_i,\partial_{i'}\}$ have the form
\begin{eqnarray*}
 \tilde{\mathcal{S}}(\tilde{X}) = \left(\begin{array}{cc}
                                         \mathcal{S}^{\nabla}(X)&0\\
                                         *& {}^t\mathcal{S}^{\nabla}(X)
                                        \end{array}
\right).
\end{eqnarray*}
\end{proof}
We have the following results.

\begin{theorem}
 Let $(M,\nabla)$ be a smooth torsion-free affine manifold. Then the following statements are equivalent:
 \begin{enumerate}
  \item[(i)] $(M,\nabla)$ is affine Szab\'o.
  \item[(ii)]  The Riemannian extension $(T^*M,g_{\nabla})$ of $(M,\nabla)$ is a pseudo-Riemannian Szab\'o manifold.
 \end{enumerate}
\end{theorem}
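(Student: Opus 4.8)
The plan is to derive the equivalence entirely from the block form of the Szab\'o operator of the Riemannian extension established in the preceding Lemma, together with the cubic homogeneity $\mathcal{S}^{\nabla}(\beta X)=\beta^{3}\mathcal{S}^{\nabla}(X)$ recorded in Section~\ref{Prem}. Write $n=\dim M$, so $\dim T^{*}M=2n$, and for a tangent vector $\tilde X=\alpha_{i}\partial_{i}+\alpha_{i'}\partial_{i'}\in T_{(p,\omega)}(T^{*}M)$ let $X=\alpha_{i}\partial_{i}\in T_{p}M$ be its base part. The Lemma says that in the basis $\{\partial_{i},\partial_{i'}\}$ the operator $\tilde{\mathcal{S}}(\tilde X)$ is block lower triangular with diagonal blocks $\mathcal{S}^{\nabla}(X)$ and ${}^{t}\mathcal{S}^{\nabla}(X)$, so that
\[
P_{\lambda}\bigl(\tilde{\mathcal{S}}(\tilde X)\bigr)=P_{\lambda}\bigl(\mathcal{S}^{\nabla}(X)\bigr)\cdot P_{\lambda}\bigl({}^{t}\mathcal{S}^{\nabla}(X)\bigr)=\bigl(P_{\lambda}(\mathcal{S}^{\nabla}(X))\bigr)^{2},
\]
since a square matrix and its transpose have the same characteristic polynomial. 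This one identity drives both implications.

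For (i) $\Rightarrow$ (ii): if $(M,\nabla)$ is affine Szab\'o, then $P_{\lambda}(\mathcal{S}^{\nabla}(X))=\lambda^{n}$ for every $X\in T_{p}M$ and every $p$, so by the displayed identity $P_{\lambda}(\tilde{\mathcal{S}}(\tilde X))=\lambda^{2n}$ for every $\tilde X\in T(T^{*}M)$. Thus the characteristic polynomial of $\tilde{\mathcal{S}}(\tilde X)$ — hence the collection of its eigenvalues — is independent of $\tilde X$ and is in particular constant on the unit pseudo-sphere bundle of $g_{\nabla}$, so $(T^{*}M,g_{\nabla})$ is a pseudo-Riemannian Szab\'o manifold.

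For (ii) $\Rightarrow$ (i): fix $p\in M$ and $X\in T_{p}M$ with $X\neq 0$ (the case $X=0$ is trivial, as $\mathcal{S}^{\nabla}(0)=0$); we must show $\mathcal{S}^{\nabla}(X)$ is nilpotent. Pick any $\omega\in T^{*}_{p}M$. Since the fibre--fibre block of $g_{\nabla}$ vanishes and its base--fibre block is the identity $\delta^{j}_{i}$, the function $\alpha_{i'}\mapsto g_{\nabla}(\tilde X,\tilde X)$ on the vertical directions is affine with nonzero linear part (because $X\neq 0$); hence for every real $\beta\neq 0$ we may choose vertical components so that the vector $\tilde X_{\beta}$ with base part $\beta X$ has $g_{\nabla}(\tilde X_{\beta},\tilde X_{\beta})=1$, i.e.\ $\tilde X_{\beta}$ lies on the unit pseudo-sphere bundle. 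By (ii) the eigenvalues of $\tilde{\mathcal{S}}(\tilde X_{\beta})$ are independent of $\beta$; by the displayed identity together with $\mathcal{S}^{\nabla}(\beta X)=\beta^{3}\mathcal{S}^{\nabla}(X)$ they are precisely $\beta^{3}\mu_{1},\dots,\beta^{3}\mu_{n}$ (each with doubled multiplicity), where $\mu_{1},\dots,\mu_{n}$ are the eigenvalues of $\mathcal{S}^{\nabla}(X)$. A multiset invariant under multiplication by $\beta^{3}$ for all $\beta\in\mathbb{R}\setminus\{0\}$ consists only of zeros, so each $\mu_{j}=0$; hence $\mathcal{S}^{\nabla}(X)$ is nilpotent and $(M,\nabla)$ is affine Szab\'o.

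I expect the only delicate point to be the second implication. Hypothesis (ii) constrains eigenvalues only on the unit pseudo-sphere bundle and gives no information at $g_{\nabla}$-null tangent vectors, so one cannot simply insert a purely vertical (hence null) $\tilde X$ to isolate the base contribution. The vertical-completion device above, which exploits the precise shape of $g_{\nabla}$ to realise every nonzero base vector, and every rescaling $\beta X$ of it, inside the unit pseudo-sphere bundle, is exactly what upgrades ``constant eigenvalues on the unit sphere'' to genuine nilpotency through the cubic scaling; everything else is bookkeeping with the block-triangular matrix supplied by the Lemma.
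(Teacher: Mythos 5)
Your proposal is correct and takes essentially the same route as the paper: both directions rest on the Lemma's block-triangular form $\tilde{\mathcal{S}}(\tilde X)=\begin{pmatrix}\mathcal{S}^{\nabla}(X)&0\\ *&{}^{t}\mathcal{S}^{\nabla}(X)\end{pmatrix}$, so that $P_{\lambda}[\tilde{\mathcal{S}}(\tilde X)]=\bigl(P_{\lambda}[\mathcal{S}^{\nabla}(X)]\bigr)^{2}$, and the converse is obtained by completing a base vector to a unit vector of $g_{\nabla}$ through its vertical components, exactly as in the paper's choice $\tilde X=\alpha_{i}\partial_{i}+\tfrac{1}{2\alpha_{i}}\partial_{i'}$. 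Your only deviation is a minor tightening of the converse: you vary the scaling $\beta$ and invoke $\mathcal{S}^{\nabla}(\beta X)=\beta^{3}\mathcal{S}^{\nabla}(X)$ to force all eigenvalues to vanish (which also covers base vectors with some zero components, where the paper's explicit lift would not apply), whereas the paper concludes directly from the constancy of the characteristic polynomial over unit vectors.
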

\begin{proof}
 Let $\tilde{X}=\alpha_i \partial_i + \alpha_{i'}\partial_{i'}$ be a vector field on $T^* M$. 
 Then the matrix of the Szab\'o operator $\tilde{S}(\tilde{X})$ with respect to the basis $\{\partial_i,\partial_{i'}\}$ is of the form
 \begin{eqnarray}\label{SzaboMatrix}
 \tilde{\mathcal{S}}(\tilde{X}) = \left(\begin{array}{cc}
                                         \mathcal{S}^{\nabla}(X)&0\\
                                         *& {}^t\mathcal{S}^{\nabla}(X)
                                        \end{array}
\right).
\end{eqnarray}
where $\mathcal{S}^{\nabla}(X)$ is the matrix of the affine Szab\'o operator on $M$ relative
to the basis $\{\partial_i\}$. Note that the characteristic polynomial $P_{\lambda}[\tilde{\mathcal{S}}(\tilde{X})]$ of $\tilde{\mathcal{S}}(\tilde{X})$
and $P_{\lambda}[\mathcal{S}^{\nabla}(X)]$ of $\mathcal{S}^{\nabla}(X)$ are related by 
\begin{eqnarray}\label{CharacteristicPolynomial}
P_{\lambda}[\tilde{\mathcal{S}}(\tilde{X})]=P_{\lambda}[\mathcal{S}^{\nabla}(X)]\cdot P_{\lambda}[{}^t\mathcal{S}^{\nabla}(X)].
\end{eqnarray}
Now, if the affine manifold $(M,\nabla)$ is assumed to be affine Szab\'o, then $\mathcal{S}^{\nabla}(X)$ has zero eigenvalues for each
vector field $X$ on $M$. Therefore, it follows from (\ref{SzaboMatrix}) that the eigenvalues of $\tilde{\mathcal{S}}(\tilde{X})$ vanish
for every vector field $\tilde{X}$ on $T^* M$. Thus $(T^*M, g_{\nabla})$ is pseudo-Riemannian Szab\'o manifold.\\
Conversely, assume that $(T^*M, g_{\nabla})$ is an pseudo-Riemannian Szab\'o manifold. If $X=\alpha_i \partial_i$ with $\alpha_{i}\neq 0$, for any $i$, is a vector field on $M$, then $\tilde{X}=\alpha_i \partial_i + \frac{1}{2\alpha_i}\partial_{i'}$ is an unit vector field at every point of
the zero section on $T^* M$. Then from (\ref{SzaboMatrix}), we see that, the characteristic polynomial 
$P_{\lambda}[\tilde{\mathcal{S}}(\tilde{X})]$ of $\tilde{\mathcal{S}}(\tilde{X})$ is the square of the characteristic polynomial
$P_{\lambda}[\mathcal{S}^{\nabla}(X)]$ of $\mathcal{S}^{\nabla}(X)$. Since for every unit vector field $\tilde{X}$ on $T^* M$ the
characteristic polynomial $P_{\lambda}[\tilde{\mathcal{S}}(\tilde{X})]$ sould be the same, it follows that for every vector field
$X$ on $M$ the characteristic polynomial $P_{\lambda}[\mathcal{S}^{\nabla}(X)]$ is the same. Hence $(M,\nabla)$ is affine Szab\'o.
\end{proof}

As an example, we have the following. The Riemannian extension of the affine Szab\'o connection on $\mathbb{R}^3$
defined by
\begin{eqnarray*}
 \nabla_{\partial_1} \partial_1 =x_1\partial_1, \quad 
 \nabla_{\partial_1} \partial_2 =2x_3\partial_1,\quad
 \nabla_{\partial_1} \partial_3 =-2x_2\partial_1
\end{eqnarray*}
is the pseudo-Riemannian metric of signature $(3,3)$ given by
\begin{align}
 g_{\nabla} &=  2dx_1\otimes dx_4 +2dx_2\otimes dx_5 +2dx_3\otimes dx_6 \nonumber\\
&  -2x_1x_4 dx_1\otimes dx_1 -4x_3x_4dx_1\otimes dx_2 +4x_2x_4dx_1\otimes dx_3.\nonumber
 \end{align}
After, a straightforward calculation, it easy to see that this metric is Szab\'o. 

The Riemannian extensions provide a link between affine and pseudo-Riemannian geometries. Some properties
of the affine connection $\nabla$ can be investigated by means of the corresponding properties of the
Riemannian extension $g_{\nabla}$. For more details and information about Riemannian extensions,
see \cite{broz,calvino2,GarciaGilkeyNikcevicLorenzo,gar} and references therein. For instance, it is known, in \cite{broz, calvino2} and references therein, that a Walker metric is a triple $(M,g,\mathcal{D})$, where $M$ is an $n$-dimensional manifold, $g$ is a pseudo-Riemannian metric on $M$ and $\mathcal{D}$ is an $r$-dimensional parallel null distribution ($r>0$). In \cite{calvino2}, the authors showed that any four-dimensional Riemannian extension is necessarily a self-dual Walker manifold, but for some particular cases, they proved that the converse holds.


\end{document}